\documentclass[11pt,a4paper,reqno]{article}

\usepackage{amssymb}
\usepackage{latexsym}
\usepackage{amsmath}
\usepackage{graphicx}
\usepackage{amsthm}
\usepackage{empheq}
\usepackage{multirow}
\usepackage{diagbox}
\usepackage{bm}
\usepackage{booktabs}
\usepackage[dvipsnames]{xcolor}
\usepackage{pagecolor}
\usepackage{subcaption}
\usepackage{tikz,lipsum,lmodern}
\usepackage{enumitem}
\usepackage{calligra}
\usepackage{mathrsfs}
\usepackage[margin=1in]{geometry}
\usepackage{authblk}
\usepackage{enumitem}
\setitemize{itemsep=0em}
\setenumerate{itemsep=0em}
\usepackage{hyperref}
\hypersetup{
  colorlinks   = true, 
  urlcolor     = blue, 
  linkcolor    = purple, 
  citecolor   = ForestGreen 
}

\usepackage[margin=1in]{geometry}

\numberwithin{equation}{section}

\theoremstyle{definition}
\newtheorem{theorem}{Theorem}[section]
\newtheorem{corollary}[theorem]{Corollary}
\newtheorem{proposition}[theorem]{Proposition}
\newtheorem{definition}[theorem]{Definition}
\newtheorem{example}[theorem]{Example}

\newtheorem{remark}[theorem]{Remark}

\newcommand{\numberset}{\mathbb}

\newcommand{\F}{\numberset{F}}

\newcommand{\fq}{\F_q}
\newcommand{\Fq}{\F_q}

\newcommand{\fqm}{\F_{q^m}}

\newcommand{\Tr}{\textnormal{Tr}}

\newcommand{\rk}{\textnormal{rk}}

\renewcommand{\longrightarrow}{\to}

\newcommand{\GL}{\textnormal{GL}_m(q)}

\def\sym{\textup{sym}}
\def\ss{\textup{ss}}

\def\cov{\mathrel{<\kern-.6em\raise.015ex\hbox{$\cdot$}}}
\def\<{\left<}
\def\>{\right>}

\newcommand{\Sym}{{\textnormal{Sym}}}

\newcommand*{\myproofname}{Proof of the claim}

\allowdisplaybreaks

\title{\textbf{Symmetric Tensor Decompositions over Finite Fields}}

\usepackage{setspace}
\author[1]{Giuseppe Cotardo}
\affil[1]{Virginia Tech, Blacksburg, U.S.A.}

\author[2]{Ferdinando Zullo\thanks{F. Z. was partially supported by the project COMBINE from University of Campania and by the Italian National Group for Algebraic and Geometric Structures and their Applications (GNSAGA - INdAM).}}
\affil[2]{Universit\`a degli Studi della Campania ``Luigi Vanvitelli'', Caserta, Italy.}

\date{}

\usepackage{setspace}
\begin{document}
\maketitle
	
\abstract{
We study the symmetric tensor rank of multiplication over finite field
extensions using linearized polynomials. Via field trace, symmetric
linearized polynomials are identified with symmetric bilinear forms and
symmetric matrices, allowing symmetric tensor decompositions to be
reformulated as spanning problems by rank-one symmetric linearized
polynomials. We translate these spanning conditions into explicit linear
systems over finite fields and use the Frobenius automorphism to obtain
computationally effective criteria. As applications, we recover known values
of the symmetric bilinear complexity for small extension degrees and obtain
explicit symmetric decompositions for several parameters. We also introduce
the symmetric tensor-rank of a symmetric rank-metric code and show that, for
the natural one-dimensional Gabidulin code associated with finite field
multiplication, this invariant coincides with the symmetric tensor rank of
the multiplication map.
}

\medskip

\textbf{Keywords:}{ symmetric tensor rank; finite field multiplication; bilinear complexity; linearized polynomials; tensor decompositions; rank-metric codes; Gabidulin codes.}\\

\textbf{MSC2020:}{ 03D15; 15A69; 12E20; 94B05}

\section*{Introduction}

The study of tensor rank and tensor decompositions is a central topic in
algebraic complexity theory, where tensor rank measures the complexity of
bilinear maps and, in particular, the number of essential scalar
multiplications needed to evaluate them. A classical problem in this area is
the bilinear complexity of multiplication in finite field extensions. More
precisely, given the multiplication map

\[
\begin{array}{cccc}
M_{q^m}:&\mathbb F_{q^m}\times \mathbb F_{q^m}&
\longrightarrow &\mathbb F_{q^m}\\&(\alpha,\beta)&\longmapsto &\alpha\beta,
\end{array}
\]
one asks for the smallest number of bilinear products required to compute
\(\alpha\beta\) over the base field \(\mathbb F_q\). This number is the
tensor rank of \(M_{q^m}\), usually denoted by \(\mu_q(m)\), and is also
known as the bilinear complexity of multiplication in \(\mathbb F_{q^m}\).
The problem has a long history and has been studied through methods from
algebraic complexity theory, algebraic geometry, and coding theory; see
\cite{ballet2021tensor,burgisser2013algebraic} and the references therein.

Since multiplication in \(\mathbb F_{q^m}\) is commutative, the tensor
associated with \(M_{q^m}\) has additional symmetry. It is therefore natural
to consider symmetric decompositions, in which the two linear forms appearing
in each rank-one summand are required to coincide. The minimum length of such
a decomposition is the symmetric tensor rank of the multiplication map,
denoted by \(\mu_q^{\mathrm{sym}}(m)\). This invariant is generally at least
\(\mu_q(m)\), and it captures the complexity of algorithms for finite field
multiplication under a natural symmetry constraint. Symmetric bilinear
algorithms over finite fields were already studied in
\cite{seroussi1984symmetric}. More broadly, symmetric multiplication
algorithms have been investigated through algebraic-geometric constructions,
including the Chudnovsky--Chudnovsky method and its variants
\cite{ballet2021tensor,chaumine2006bilinear,chudnovsky1988algebraic,shokrollahi1992optimal}.

Despite substantial progress, determining exact values of
\(\mu_q^{\mathrm{sym}}(m)\) remains difficult. The lower bound $\mu_q(m)\geq 2m-1$ also applies to \(\mu_q^{\mathrm{sym}}(m)\), and equality holds for both
ordinary and symmetric tensor rank when \(q\geq 2m-2\); see
\cite[Theorem~2.2]{ballet2021tensor}. Further exact values are known in
special ranges, for instance through elliptic-curve constructions and
related algebraic-geometric methods
\cite{chaumine2006bilinear,chudnovsky1988algebraic, shokrollahi1992optimal}.
Outside these regimes, only a limited number of exact values are known, and
many cases are understood only through bounds.

The goal of this paper is to develop a constructive approach to symmetric
tensor decompositions of \(M_{q^m}\) using linearized polynomials. Linearized
polynomials, originally introduced by Ore, provide a natural model for
\(\mathbb F_q\)-linear maps on finite field extensions
\cite{lidl1997finite,ore1933special}. Via the field trace, they are closely
related to bilinear forms and matrix representations. We exploit this
correspondence in the symmetric setting: symmetric linearized polynomials
correspond to symmetric bilinear forms, and rank-one symmetric linearized
polynomials admit trace representations of the form
\[
\alpha \operatorname{Tr}_{q^m/q}(\beta x),
\]
with a suitable relation between \(\alpha\) and \(\beta\). This allows us to
translate the problem of finding symmetric tensor decompositions into an
explicit linear-algebraic problem over finite fields.

More precisely, we show that the existence of a symmetric tensor
decomposition of length~\(R\) can be expressed as a spanning condition among
rank-one symmetric linearized polynomials. After choosing a generator of
\(\mathbb F_{q^m}\) over \(\mathbb F_q\), this spanning condition becomes a
system of explicit linear equations over \(\mathbb F_q\). We then use the
Frobenius automorphism to enlarge these systems in a way that is particularly
well suited for computation. This gives a practical criterion for verifying
candidate decompositions and provides a systematic method for producing
explicit examples.

As applications, we recover known values of \(\mu_q^{\mathrm{sym}}(m)\) for
small extension degrees and provide explicit symmetric tensor decompositions
in several cases. In particular, we focus on the cases \(m\in\{2,3,4\}\).
The cases \(m=2\) and \(m=3\) illustrate how the linear systems specialize
to concrete determinantal conditions, while the case \(m=4\) shows how the
method can be used computationally to obtain explicit decompositions over
small fields.

We also reinterpret the construction in the language of symmetric
rank-metric codes. Rank-metric codes were introduced by Delsarte in terms of
bilinear forms and later developed in the extension-field setting by
Gabidulin, giving rise to the family now known as Gabidulin codes
\cite{delsarte1978bilinear,gabidulin1985theory}. The first-slice space of
the multiplication tensor is equivalent to the one-dimensional Gabidulin code
\[
\mathcal{G}_1(m,q)=\langle x\rangle_{\mathbb F_{q^m}},
\]
which, under the field trace, can be viewed as a space of symmetric
linearized polynomials. Motivated by this observation and by the
matrix-slice characterization of tensor rank, we introduce the symmetric
tensor rank of a symmetric rank-metric code as the minimum number of
rank-one symmetric matrices whose \(\mathbb F_q\)-span contains the code.
With this definition, the symmetric tensor rank of \(M_{q^m}\) agrees with
the symmetric tensor rank of the natural one-dimensional symmetric
rank-metric code associated with \(\mathcal{G}_1(m,q)\). This perspective is
closely related to the study of tensor rank for rank-metric codes in
\cite{byrne2023tensor,byrne2019tensor}, and extends it to the symmetric
setting.

The paper is organized as follows. In Section~\ref{sec:1}, we recall the
necessary background on linearized polynomials, symmetric bilinear forms,
and symmetric matrices, and we describe the rank-preserving correspondence
among these objects. In Section~\ref{sec:2}, we review tensor rank and
symmetric tensor rank for finite field multiplication and recall the
connection between the first-slice space of \(M_{q^m}\) and the
one-dimensional Gabidulin code. Section~\ref{sec:symdec} contains the main
linearized-polynomial framework and the Frobenius-stable linear systems used
to construct decompositions. In Section~\ref{sec:4}, we apply the method to obtain explicit decompositions for \(m\in\{2,3,4\}\). Finally, in
Section~\ref{sec:rankmetriccodes}, we introduce the symmetric tensor rank of
symmetric rank-metric codes and discuss the resulting coding-theoretic
interpretation.

\section{Symmetric bilinear forms, symmetric matrices and linearized polynomials}\label{sec:1}

In this section, we recall the algebraic structures used throughout the paper:
linearized polynomials, symmetric bilinear forms, and matrix representations
over finite fields. These objects provide equivalent viewpoints for studying
\(\mathbb F_q\)-linear maps on \(\mathbb F_{q^m}\).

We begin by introducing the algebra of linearized polynomials over
\(\mathbb F_{q^m}\), which is in one-to-one correspondence with
\(\mathbb F_q\)-linear endomorphisms of \(\mathbb F_{q^m}\). We then focus on
the subclass of symmetric linearized polynomials, defined via the trace
bilinear form, and describe their connection with symmetric bilinear forms and symmetric matrices. This correspondence will play a central role in the sequel. It allows us to
translate questions about symmetric tensor rank into questions about
linearized polynomials and rank properties of the associated matrices. In
particular, since these representations preserve rank, we will freely switch
between the polynomial, bilinear-form, and matrix viewpoints depending on the
context.

\subsection{Symmetric linearized polynomials}

Let \(\mathcal L_{m,q}\) denote the \(\mathbb F_q\)-algebra of \(q\)-\textbf{polynomials} (or \textbf{linearized polynomial})
over \(\mathbb F_{q^m}\), considered modulo \(x^{q^m}-x\). Equivalently,
\[
\mathcal L_{m,q}
=
\left\{
\sum_{i=0}^{m-1} a_i x^{q^i}: a_i\in \mathbb F_{q^m}
\right\}.
\]
It is well known that \(\mathcal L_{m,q}\) is isomorphic to the algebra of
\(\mathbb F_q\)-linear endomorphisms of \(\mathbb F_{q^m}\), and hence, after
fixing an \(\mathbb F_q\)-basis, to \(\mathbb F_q^{m\times m}\); see~\cite{WU201379}.
We identify \(f(x)\in L_{m,q}\) with the \(\mathbb F_q\)-linear map
\(x\mapsto f(x)\) on \(\mathbb F_{q^m}\). Henceforth, we will talk about the kernel of a linearized polynomial $f(x) \in \mathcal{L}_{m,q}$, meaning the kernel of the $\fq$-linear map $f(x)$ over $\mathbb{F}_{q^m}$.

Consider the non-degenerate symmetric bilinear form of $\F_{q^m}$ over $\F_q$ (see next section for details) defined for every $x,y \in \F_{q^m}$ by
\begin{equation}\label{eq:bilform} \langle x,y\rangle= \mathrm{Tr}_{q^m/q}(xy). \end{equation}
The adjoint \(f^\top\) of the \(q\)-polynomial
\[
f(x)=\sum_{i=0}^{m-1}a_i x^{q^i}
\]
with respect to \(\langle\cdot,\cdot\rangle\) is the unique element of
\(\mathcal L_{m,q}\) satisfying
\[
\operatorname{Tr}_{q^m/q}(y f(z))
=
\operatorname{Tr}_{q^m/q}(z f^\top(y))
\]
for all \(y,z\in\mathbb F_{q^m}\). Explicitly, we have
\[
f^\top(x)
=
a_0x+\sum_{i=1}^{m-1} a_i^{q^{m-i}}x^{q^{m-i}}.
\]
Linearized polynomials that coincide with their adjoints are called \textbf{symmetric linearized polynomials} for their connection with symmetric matrices; see~\cite{longobardi2020automorphism} and the next subsection. We will use the following notation for the vector space of symmetric linearized
polynomials.
\[
\mathcal S_q(m)=
\left\{
\sum_{i=0}^{m-1} c_i x^{q^i}
:
c_{m-i}=c_i^{q^{m-i}}
\text{ for } i\in\{0,\ldots,m-1\}
\right\}
\subseteq \mathcal L_{m,q},
\]
where the indices are read modulo \(m\). The following result provides a characterization of rank-one symmetric linearized polynomials.

\begin{proposition}\label{prop:rank1sym}
An element \(f(x)\in \mathcal S_q(m)\) has rank one if and only if
\[
f(x)=\alpha\operatorname{Tr}_{q^m/q}(\beta x)
\]
for some \(\alpha,\beta\in\mathbb F_{q^m}^*\) with \(\alpha/\beta\in\mathbb F_q\).
\end{proposition}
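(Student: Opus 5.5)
We prove the two implications separately, using throughout that the trace form $\langle x,y\rangle=\operatorname{Tr}_{q^m/q}(xy)$ is non-degenerate, so every $\mathbb F_q$-linear functional on $\mathbb F_{q^m}$ is of the form $x\mapsto \operatorname{Tr}_{q^m/q}(\beta x)$ for a unique $\beta$.

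\emph{Rank one implies the trace form.} Let $f\in\mathcal S_q(m)$ have rank one. Its image is a one-dimensional $\mathbb F_q$-subspace $\langle \alpha\rangle_{\mathbb F_q}$ with $\alpha\neq 0$. Hence $f(x)=\alpha\,\lambda(x)$ for a nonzero $\mathbb F_q$-linear functional $\lambda$. By non-degeneracy, $\lambda(x)=\operatorname{Tr}_{q^m/q}(\beta x)$ for some $\beta\neq 0$, so $f(x)=\alpha\operatorname{Tr}_{q^m/q}(\beta x)$.

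\emph{Using symmetry.} Next we impose $f=f^\top$. For all $y,z$ we have
\[
\operatorname{Tr}_{q^m/q}(y f(z))=\operatorname{Tr}_{q^m/q}(\alpha y)\operatorname{Tr}_{q^m/q}(\beta z),
\]
because $\operatorname{Tr}_{q^m/q}(\beta z)\in\mathbb F_q$ can be pulled out of the trace. Symmetry says this equals $\operatorname{Tr}_{q^m/q}(z f(y))=\operatorname{Tr}_{q^m/q}(\alpha z)\operatorname{Tr}_{q^m/q}(\beta y)$. So the two functionals $\ell_\alpha=\operatorname{Tr}_{q^m/q}(\alpha\,\cdot)$ and $\ell_\beta=\operatorname{Tr}_{q^m/q}(\beta\,\cdot)$ satisfy
\[
\ell_\alpha(y)\ell_\beta(z)=\ell_\beta(y)\ell_\alpha(z)\quad\text{for all } y,z.
\]
Choose $z_0$ with $\ell_\beta(z_0)\neq 0$, which exists because $\beta\neq0$. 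Then $\ell_\alpha=c\,\ell_\beta$ with $c=\ell_\alpha(z_0)/\ell_\beta(z_0)\in\mathbb F_q$. This means $\operatorname{Tr}_{q^m/q}((\alpha-c\beta)y)=0$ for all $y$, and non-degeneracy gives $\alpha=c\beta$. Since $\alpha\neq0$, we get $c\neq 0$, and so $\alpha/\beta=c\in\mathbb F_q^*$.

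\emph{Converse.} Suppose $f(x)=\alpha\operatorname{Tr}_{q^m/q}(\beta x)$ with $\alpha=c\beta$ and $c\in\mathbb F_q^*$. Its rank is one, since its image is $\alpha\mathbb F_q$: the trace is surjective onto $\mathbb F_q$. For symmetry, the same computation gives $\operatorname{Tr}_{q^m/q}(yf(z))=c\,\operatorname{Tr}_{q^m/q}(\beta y)\operatorname{Tr}_{q^m/q}(\beta z)$, which is symmetric in $y$ and $z$. Hence $f^\top=f$ by uniqueness of the adjoint, so $f\in\mathcal S_q(m)$.

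One could instead check symmetry at the coefficient level. Expanding $f(x)=\sum_i \alpha\beta^{q^i}x^{q^i}$ and testing the condition $c_{m-i}=c_i^{q^{m-i}}$ leads to $\alpha\beta^{q^{m-i}}=\alpha^{q^{m-i}}\beta$, i.e.\ $(\alpha/\beta)^{q^{m-i}}=\alpha/\beta$, which again gives $\alpha/\beta\in\mathbb F_q$ using $i=m-1$. The bilinear-form argument above is cleaner, though. The only delicate step is extracting the scalar $c\in\mathbb F_q$ from the identity between products of functionals, and that is handled by the choice of $z_0$.
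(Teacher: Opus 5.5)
Your proof is correct and follows essentially the same route as the paper. The paper takes the rank-one trace representation from Lidl--Niederreiter, where you derive it from non-degeneracy of the trace form. It then compares $f$ with its adjoint $f^\top(x)=\beta\operatorname{Tr}_{q^m/q}(\alpha x)$, which is exactly your identity $\ell_\alpha(y)\ell_\beta(z)=\ell_\beta(y)\ell_\alpha(z)$ after pairing with the trace form. Your choice of $z_0$ also supplies the justification of the final equivalence $\alpha\operatorname{Tr}_{q^m/q}(\beta x)=\beta\operatorname{Tr}_{q^m/q}(\alpha x)\iff\alpha/\beta\in\mathbb F_q$, which the paper states without proof.
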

\begin{proof}
    By \cite[Theorem 2.24]{lidl1997finite}, a linearized polynomial $f$ has rank one if and only if there exist $\alpha, \beta \in \fqm^*$ such that $f(x)=\alpha \mathrm{Tr}_{q^m/q}(\beta x)$. It remains to determine when such a rank-one map is symmetric. The adjoint of $f(x)=\alpha\operatorname{Tr}_{q^m/q}(\beta x)$ is
\[
f^\top(x)=\beta\operatorname{Tr}_{q^m/q}(\alpha x).
\]
Thus \(f=f^\top\) if and only if $\alpha\operatorname{Tr}_{q^m/q}(\beta x)
=
\beta\operatorname{Tr}_{q^m/q}(\alpha x)$ for every \(x\in\mathbb F_{q^m}\), which holds if and only if
\(\alpha/\beta\in\mathbb F_q\).
\end{proof}

\subsection{Symmetric bilinear forms}

Let $V$ be an $m$-dimensional vector space over $\mathbb{F}_q$. A \textbf{bilinear form} on $V$ is a map $B : V \times V \to \mathbb{F}_q$ that is linear in each argument. If $B$ satisfies $B(u,v)=B(v,u)$ for all $u,v \in V$ then we say that $B$ is \textbf{symmetric}. After fixing a basis of \(V\), every bilinear form is represented by a matrix
\(A\in\mathbb F_q^{m\times m}\) via
\[
B(u,v)=u^TAv.
\]
The form \(B\) is symmetric if and only if \(A\) is symmetric.
Two symmetric bilinear forms are said to be \textbf{equivalent} if their associated matrices are congruent, that is if $A' = P^{T}AP$ for some $P \in \mathrm{GL}_m(\mathbb{F}_q)$. 
The classification of symmetric bilinear forms over finite fields depends on
whether \(\operatorname{char}(\mathbb F_q)=2\) and is closely related to
invariants such as rank and discriminant. In this paper, however, we will only
use the rank of a symmetric bilinear form, namely the rank of any matrix
representing it.
We denote by \(\mathrm{Sym}^2(V)\) the vector space of symmetric bilinear forms on \(V\).
The rank function induces the rank metric on \(\mathrm{Sym}^2(V)\), that is
\[
d(B_1,B_2)=\operatorname{rk}(B_1-B_2).
\]

We now specialize to the case \(V=\mathbb F_{q^m}\), viewed as an
\(m\)-dimensional vector space over \(\mathbb F_q\). In this setting, the
nondegenerate trace function allows one to represent bilinear forms by
linearized polynomials; see \cite[Lemma 13]{schmidt2010symmetric}.

\begin{proposition}\label{prop:bilform}
For each bilinear form \(B:\mathbb F_{q^m}\times \mathbb F_{q^m}\to \mathbb F_q\),
there exists a unique~\(f\in\mathcal L_{m,q}\) such that
\[
B(x,y)=\operatorname{Tr}_{q^m/q}(f(x)y)
\]
for every \(x,y\in\mathbb F_{q^m}\).
\end{proposition}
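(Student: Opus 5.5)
The plan is to combine the nondegeneracy of the trace form \eqref{eq:bilform} with the identification of $\mathcal L_{m,q}$ with $\mathrm{End}_{\mathbb F_q}(\mathbb F_{q^m})$ recalled above.

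\emph{Existence.} Fix $x\in\mathbb F_{q^m}$. The map $y\mapsto B(x,y)$ is an $\mathbb F_q$-linear functional on $\mathbb F_{q^m}$. The map $\mathbb F_{q^m}\to \mathrm{Hom}_{\mathbb F_q}(\mathbb F_{q^m},\mathbb F_q)$, $a\mapsto \operatorname{Tr}_{q^m/q}(a\,\cdot)$, is $\mathbb F_q$-linear and injective. Injectivity holds because the trace is not identically zero: it is a polynomial of degree $q^{m-1}<q^m$. So if $a\neq 0$, then $y\mapsto \operatorname{Tr}_{q^m/q}(ay)$ is nonzero as $y$ ranges over $\mathbb F_{q^m}$. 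Both spaces have dimension $m$, so this map is an isomorphism. Hence there is a unique $g(x)\in\mathbb F_{q^m}$ with $B(x,y)=\operatorname{Tr}_{q^m/q}(g(x)y)$ for all $y$.

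Next we check that $x\mapsto g(x)$ is $\mathbb F_q$-linear. For $\lambda,\mu\in\mathbb F_q$, the elements $g(\lambda x+\mu x')$ and $\lambda g(x)+\mu g(x')$ induce the same functional, because $B$ is linear in its first argument and the trace is $\mathbb F_q$-linear. Uniqueness of the representing element then forces them to be equal. Since every $\mathbb F_q$-linear endomorphism of $\mathbb F_{q^m}$ is represented by a unique element of $\mathcal L_{m,q}$, we obtain $f\in\mathcal L_{m,q}$ with $f(x)=g(x)$ for every $x$.

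\emph{Uniqueness.} Suppose $f_1,f_2\in\mathcal L_{m,q}$ both represent $B$. Then $\operatorname{Tr}_{q^m/q}\bigl((f_1(x)-f_2(x))y\bigr)=0$ for all $x,y$. Nondegeneracy gives $f_1(x)=f_2(x)$ for all $x$. Two $q$-polynomials of $q$-degree at most $m-1$ that agree as maps are equal, by the injectivity of $\mathcal L_{m,q}\to\mathrm{End}_{\mathbb F_q}(\mathbb F_{q^m})$.

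An alternative is a dimension count. The map $\mathcal L_{m,q}\to\{\text{bilinear forms}\}$, $f\mapsto \operatorname{Tr}_{q^m/q}(f(x)y)$, is $\mathbb F_q$-linear and injective by the argument just given. Both spaces have $\mathbb F_q$-dimension $m^2$, so it is bijective. I would likely present this shorter version.

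The only real point to check is the nondegeneracy of the trace form, namely that the trace is not identically zero. This follows from the degree bound noted above, so I expect no serious obstacle.
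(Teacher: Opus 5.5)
Your proof is correct. The paper itself does not prove this statement; it only cites \cite[Lemma 13]{schmidt2010symmetric}, so there is no in-paper argument to match.

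Your argument is self-contained and rests on two ingredients the paper already uses without proof elsewhere:
\begin{itemize}
\item the nondegeneracy of the trace form $\operatorname{Tr}_{q^m/q}(xy)$, which the paper invokes in the proof of Proposition~\ref{prop:bilform2} and in the isometry theorem, and which you justify by the degree bound $q^{m-1}<q^m$;
\item the identification of $\mathcal L_{m,q}$ with $\mathrm{End}_{\mathbb F_q}(\mathbb F_{q^m})$, whose injectivity half is the same root-counting observation.
\end{itemize}

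Of your two versions, the dimension count is the better one to present. It uses $\dim_{\mathbb F_q}\mathcal L_{m,q}=m^2$, which equals the dimension of the space of bilinear forms, together with injectivity of $f\mapsto \operatorname{Tr}_{q^m/q}(f(x)y)$. It is shorter, and it exhibits the correspondence directly as an $\mathbb F_q$-linear bijection. That is exactly what the paper later relies on when it asserts that $\varphi$ is an $\mathbb F_q$-linear isomorphism. Your pointwise version is more explicit, since it constructs $f(x)$ as the representing element of the functional $B(x,\cdot)$, but it needs the extra check that $x\mapsto g(x)$ is linear. The citation saves space; your argument makes Section~\ref{sec:1} self-contained at the cost of a few lines.
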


The next result shows that, under this correspondence, symmetric bilinear forms
are represented exactly by symmetric linearized polynomials; see \cite[Section 2.1]{longobardi2020automorphism}. We include a proof for completeness.

\begin{proposition}\label{prop:bilform2}
Let \(f\in\mathcal L_{m,q}\). The bilinear form
\[
B(x,y)=\operatorname{Tr}_{q^m/q}(f(x)y)
\]
is symmetric if and only if \(f\in\mathcal S_q(m)\).
\end{proposition}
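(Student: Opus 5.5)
The plan is to reduce symmetry of \(B\) to the identity \(f=f^\top\), using the defining property of the adjoint with respect to the trace form \eqref{eq:bilform} together with the uniqueness in Proposition~\ref{prop:bilform}. First I will check that \(\mathcal S_q(m)\) is exactly the set of \(f\in\mathcal L_{m,q}\) with \(f=f^\top\). Write \(f(x)=\sum_{i=0}^{m-1}a_ix^{q^i}\) and compare coefficients with the explicit formula \(f^\top(x)=a_0x+\sum_{i=1}^{m-1}a_i^{q^{m-i}}x^{q^{m-i}}\). The coefficient of \(x^{q^{j}}\) in \(f^\top\) is \(a_{m-j}^{q^{j}}\) for \(j\neq 0\). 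So \(f=f^\top\) means \(a_j=a_{m-j}^{q^j}\) for all \(j\) (indices mod \(m\)). Raising to the power \(q^{m-j}\) turns this into \(a_{m-j}=a_j^{q^{m-j}}\), which is precisely the condition defining \(\mathcal S_q(m)\). Since \(\mathcal L_{m,q}\) is taken modulo \(x^{q^m}-x\), the monomials \(x^{q^i}\), \(0\le i\le m-1\), give distinct maps, so comparing coefficients is legitimate.

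Next I compute the transposed form. For all \(x,y\in\mathbb F_{q^m}\),
\[
B(y,x)=\operatorname{Tr}_{q^m/q}(f(y)x)=\operatorname{Tr}_{q^m/q}(x f(y))=\operatorname{Tr}_{q^m/q}(y f^\top(x)),
\]
using the adjoint identity with \(z\) replaced by \(y\) and \(y\) replaced by \(x\). Hence \(B(y,x)=\operatorname{Tr}_{q^m/q}(f^\top(x)y)\), so the transposed form \(B^{\top}(x,y):=B(y,x)\) is represented by \(f^\top\). The form \(B\) is symmetric if and only if \(B=B^\top\), that is, \(\operatorname{Tr}_{q^m/q}(f(x)y)=\operatorname{Tr}_{q^m/q}(f^\top(x)y)\) for all \(x,y\). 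By the uniqueness in Proposition~\ref{prop:bilform}, this holds if and only if \(f=f^\top\). Alternatively, one can argue directly: the difference \(f(x)-f^\top(x)\) is trace-orthogonal to every \(y\), and nondegeneracy of the trace form forces it to vanish. Combining this with the first step gives the claim.

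The main obstacle is the index bookkeeping in the first step. One has to make sure that the explicit adjoint formula, after reindexing and applying the appropriate Frobenius power, matches the paper's stated condition \(c_{m-i}=c_i^{q^{m-i}}\) exactly, including the case \(i=0\), where the condition is just \(c_0=c_0\). The rest is formal.
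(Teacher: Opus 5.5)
Your proof is correct and follows essentially the paper's route: both reduce symmetry of \(B\) to the identity \(f=f^\top\) using the adjoint relation and nondegeneracy of the trace form. Phrasing the last step via uniqueness in Proposition~\ref{prop:bilform} amounts to the same thing. Your coefficient check that the defining condition of \(\mathcal S_q(m)\) is exactly self-adjointness is something the paper treats as definitional, so it only makes the argument more complete.
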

\begin{proof}
The bilinear form \(B\) is symmetric if and only if $\operatorname{Tr}_{q^m/q}(f(x)y)
=
\operatorname{Tr}_{q^m/q}(f(y)x)$ for every \(x,y\in\mathbb F_{q^m}\). By the definition of the adjoint, $\operatorname{Tr}_{q^m/q}(f(x)y) =
\operatorname{Tr}_{q^m/q}(x f^\top(y))$.
Hence~\(B\) is symmetric if and only if $\operatorname{Tr}_{q^m/q}(x f^\top(y))
=
\operatorname{Tr}_{q^m/q}(f(y)x)$
for every \(x,y\in\mathbb F_{q^m}\). Since multiplication in
\(\mathbb F_{q^m}\) is commutative, this is equivalent to
\[
\operatorname{Tr}_{q^m/q}(x f^\top(y))
=
\operatorname{Tr}_{q^m/q}(x f(y))
\]
for every \(x,y\in\mathbb F_{q^m}\). Therefore, we have 
\[
\operatorname{Tr}_{q^m/q}\bigl(x(f^\top(y)-f(y))\bigr)=0
\]
for every \(x,y\in\mathbb F_{q^m}\). Since the trace form is nondegenerate,
we obtain \(f^\top(y)=f(y)\) for every \(y\in\mathbb F_{q^m}\), that is,
\(f=f^\top\). Hence \(B\) is symmetric if and only if
\(f\in\mathcal S_q(m)\).
\end{proof}

The previous proposition shows that the map
\[
\varphi:\mathcal S_q(m)\longrightarrow \operatorname{Sym}^2(V):f\longmapsto B_f,
\]
where
\[
B_f(x,y)=\operatorname{Tr}_{q^m/q}(f(x)y),
\]
is well defined. By Proposition~\ref{prop:bilform}, this map is a one-to-one correspondence
between symmetric linearized polynomials and symmetric bilinear forms. The next
result shows that this correspondence preserves rank.

\begin{theorem}
The map \(\varphi\) is an isometry between \(\mathcal S_q(m)\) and
\(\operatorname{Sym}^2(V)\) with respect to the rank metric.
\end{theorem}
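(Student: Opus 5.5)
Since $\varphi$ is $\mathbb F_q$-linear (the trace is $\mathbb F_q$-linear and $B_{f-g}=B_f-B_g$), it suffices to show that $\operatorname{rk}(B_f)=\operatorname{rk}(f)$ for every $f\in\mathcal S_q(m)$. Here $\operatorname{rk}(f)=\dim_{\mathbb F_q}\operatorname{Im}(f)=m-\dim_{\mathbb F_q}\ker f$. Then
\[
d(B_f,B_g)=\operatorname{rk}(B_{f-g})=\operatorname{rk}(f-g)=d(f,g).
\]
Bijectivity is already given by Propositions~\ref{prop:bilform} and~\ref{prop:bilform2}: surjectivity onto $\operatorname{Sym}^2(V)$ follows by combining existence in Proposition~\ref{prop:bilform} with the symmetry criterion, and injectivity follows from uniqueness.

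To compare ranks, I will use the intrinsic description of the rank of a bilinear form as $m-\dim\operatorname{Rad}(B)$. Here $\operatorname{Rad}(B)=\{x\in V: B(x,y)=0 \text{ for all } y\}$ is the left radical. It is independent of the chosen basis: in coordinates $B(u,v)=u^TAv$, the radical is the left null space of $A$, whose dimension is $m-\operatorname{rk}(A)$. For $B_f$ we have $x\in\operatorname{Rad}(B_f)$ if and only if $\operatorname{Tr}_{q^m/q}(f(x)y)=0$ for all $y\in\mathbb F_{q^m}$. Since the trace form $\langle\cdot,\cdot\rangle$ in \eqref{eq:bilform} is nondegenerate, this holds if and only if $f(x)=0$. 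Hence $\operatorname{Rad}(B_f)=\ker f$, and therefore $\operatorname{rk}(B_f)=m-\dim\ker f=\operatorname{rk}(f)$.

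The only point requiring care is the identification of the matrix rank with $m-\dim\operatorname{Rad}(B)$, that is, the translation between the coordinate definition of the rank of $B$ used in the paper and the radical. This is standard linear algebra. Alternatively, one can fix a basis $\{e_i\}$ and its trace-dual basis $\{e_i^*\}$, and observe that the matrix $A_{ij}=\operatorname{Tr}(f(e_i)e_j)$ is exactly the matrix of $f$ with respect to these two bases, which makes the rank equality immediate. Note that symmetry of $f$ plays no role in the rank comparison; it is only needed for $\varphi$ to land in $\operatorname{Sym}^2(V)$.
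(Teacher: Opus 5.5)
Your proposal is correct and follows essentially the same route as the paper: bijectivity and linearity come from Propositions~\ref{prop:bilform} and~\ref{prop:bilform2}, nondegeneracy of the trace form gives $\operatorname{rad}(B_f)=\ker f$, hence $\operatorname{rk}(B_f)=\operatorname{rk}(f)$, and linearity turns rank preservation into preservation of distances. Your extra justifications are correct but optional: the left-null-space argument for $\operatorname{rk}(B)=m-\dim\operatorname{rad}(B)$, and the alternative via a basis and its trace-dual basis.
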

\begin{proof}
By Propositions~\ref{prop:bilform} and~\ref{prop:bilform2}, the map \(\varphi\) is an
\(\mathbb F_q\)-linear isomorphism. It remains to show that it preserves rank. Let \(f\in\mathcal S_q(m)\). The radical of \(B_f\) is
\[
\operatorname{rad}(B_f)
=
\{x\in\mathbb F_{q^m}: B_f(x,y)=0 \text{ for all } y\in\mathbb F_{q^m}\}.
\]
Since $B_f(x,y)=\operatorname{Tr}_{q^m/q}(f(x)y)$, and the trace form is nondegenerate, we have $B_f(x,y)=0$ for all $y\in\mathbb F_{q^m}$ if and only if
$f(x)=0$. Hence $\operatorname{rad}(B_f)=\ker(f)$ and we get
\[
\operatorname{rk}(B_f)
=
m-\dim_{\mathbb F_q}\operatorname{rad}(B_f)
=
m-\dim_{\mathbb F_q}\ker(f)
=
\operatorname{rk}(f).
\]
Therefore \(\varphi\) preserves rank.
Finally, since \(\varphi\) is linear, for every \(f,g\in\mathcal S_q(m)\) we have
\[
d(B_f,B_g)
=
\operatorname{rk}(B_f-B_g)
=
\operatorname{rk}(B_{f-g})
=
\operatorname{rk}(f-g).
\]
Hence \(\varphi\) is an isometry with respect to the rank metric.
\end{proof}

\begin{remark}
    In view of the previous results, we will freely identify symmetric bilinear forms, symmetric linearized polynomials, and symmetric matrices once a basis is fixed. These identifications preserve rank and rank distance.
\end{remark}

Since we will repeatedly switch among these models, we illustrate the correspondence with an example.

\begin{example}
Let \(q=2\) and \(m=4\), and let \(\alpha\in \mathbb F_{2^4}\) be a root of \(x^4+x+1\). Consider the following ordered basis of $\F_{2^4}$
\[ \mathcal{B}=(1,\alpha,\alpha^2,\alpha^3). \]
Its trace-dual basis is 
\[ \mathcal{B}^*=(\alpha^{14},\alpha^2,\alpha,1). \]
The polynomials \(x,\alpha x,\alpha^2x\), and \(\alpha^3x\) are symmetric. The associated matrices, obtained by evaluating these polynomials on the trace-dual basis \(\mathcal{B}^*\) and expanding the images with respect to the basis \(\mathcal{B}\), are:
\[
\begin{pmatrix}
1 & 0 & 0 & 1 \\
0 & 0 & 1 & 0 \\
0 & 1 & 0 & 0 \\
1 & 0 & 0 & 0
\end{pmatrix},\quad
\begin{pmatrix}
1 & 0 & 0 & 0 \\
0 & 0 & 0 & 1 \\
0 & 0 & 1 & 0 \\
0 & 1 & 0 & 0
\end{pmatrix},\quad
\begin{pmatrix}
0 & 1 & 0 & 0 \\
1 & 1 & 0 & 0 \\
0 & 0 & 0 & 1 \\
0 & 0 & 1 & 0
\end{pmatrix},\quad
\begin{pmatrix}
0 & 0 & 1 & 0 \\
0 & 1 & 1 & 0 \\
1 & 1 & 0 & 0 \\
0 & 0 & 0 & 1
\end{pmatrix}.
\]
This illustrates explicitly how symmetric linearized polynomials are represented by symmetric matrices after choosing a basis and its trace-dual basis.
\end{example}

\section{Symmetric tensor rank of multiplication over finite fields}\label{sec:2}

In this section, we recall some definitions and results on the optimal bilinear complexity of the multiplication map 

\begin{equation*}
M_{q^m}:\fqm\times \fqm\rightarrow\fqm:(\alpha,\beta)\mapsto\alpha\beta.
\end{equation*}

A standard reference is~\cite{ballet2021tensor}. The bilinear complexity of an algorithm for \(M_{q^m}\) is the number of
non-scalar bilinear multiplications required to compute the product of two
elements of \(\mathbb F_{q^m}\). The \textbf{tensor rank}  (or \textbf{bilinear complexity}) of
\(M_{q^m}\), denoted by \(\mu_q(m)\), is the minimum length of such an algorithm.
Equivalently, it is the tensor rank of the tensor representing \(M_{q^m}\).  More explicitly, \(\mu_q(m)\) is the smallest integer \(R\) such that there exist
\(\phi_i,\psi_i\in \mathbb F_{q^m}^{\vee}\) and
\(w_i\in\mathbb F_{q^m}\), for \(i=1,\ldots,R\), satisfying

\begin{equation}
\label{eq:mult}
    \alpha\beta=\sum_{i=1}^R \varphi_i(\alpha)\psi_i(\beta)w_i.
\end{equation}

for all \(\alpha,\beta\in\mathbb F_{q^m}\). Equivalently, the tensor \(M_{q^m}\) admits a decomposition
\begin{equation}
\label{eq:tM}
    M_{q^m}
=
\sum_{i=1}^R w_i\otimes \phi_i\otimes \psi_i
\in
\mathbb F_{q^m}\otimes \mathbb F_{q^m}^{\vee}\otimes \mathbb F_{q^m}^{\vee}.
\end{equation}

where $\fqm^\vee$ denotes the dual of $\fqm$ as vector space over $\Fq$. We recall the following. 

\begin{definition}
    An \textbf{algorithm} for $M_{q^m}$ of length $R$ is a collection $\{\phi_i,\psi_i,w_i\}_{i=1}^R$ satisfying~\eqref{eq:mult} or equivalently~\eqref{eq:tM}. We say that such an algorithm is symmetric if \(\varphi_i=\psi_i\) for every \(i\in\{1,\ldots,R\}\).
\end{definition}

Since multiplication in \(\mathbb F_{q^m}\) is commutative, the tensor
\(M_{q^m}\) is symmetric. It is therefore natural to
consider its symmetric tensor rank, denoted by
\(\mu_q^{\mathrm{sym}}(m)\), where one requires decompositions with
\(\varphi_i=\psi_i\) for all \(i\). It is known that $\mu_q(m) \le \mu_q^{\mathrm{sym}}(m)$, and determining exact values or bounds for these quantities is a central topic in algebraic complexity theory (see~\cite{burgisser2013algebraic} and~\cite{ballet2021tensor}). A fundamental lower bound for \(\mu_q(m)\), and hence for
\(\mu_q^{\mathrm{sym}}(m)\), was obtained in
\cite[Theorem~2.2]{ballet2021tensor} and
\cite[Proposition~5.12]{byrne2019tensor} in connection with rank-metric
codes; see also Section~\ref{sec:rankmetriccodes}. It is a consequence of the
results in \cite{de1983characterization} applied to
\cite[Proposition~14.47]{burgisser2013algebraic}, and can be stated as follows.

\begin{theorem}\label{thm:burg}
For every \(q\) and \(m\), one has
\[
\mu_q(m)\geq 2m-1.
\]
Moreover, $\mu_q(m)=\mu_q^{\mathrm{sym}}(m)=2m-1$ if and only if \(q\geq 2m-2\).
\end{theorem}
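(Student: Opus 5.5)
The plan has three parts: a lower bound via a polynomial-evaluation reformulation, a symmetric interpolation construction when $q$ is large, and a rational-normal-curve argument showing $q\geq 2m-2$ is necessary.

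\textbf{Lower bound.} Fix a decomposition $\alpha\beta=\sum_{i=1}^R\phi_i(\alpha)\psi_i(\beta)w_i$. Identify $\mathbb F_{q^m}\cong\mathbb F_q[x]/(p(x))$ with $p$ irreducible of degree $m$. The key step is to show that the linear forms $\phi_i,\psi_i$ must "see" enough of the product. Following Burgisser et al., consider the bilinear map $\mathbb F_q[x]_{<m}\times\mathbb F_q[x]_{<m}\to\mathbb F_q[x]_{<2m-1}$ given by polynomial multiplication. Its reduction modulo $p$ is $M_{q^m}$. Since $p$ is irreducible, the algebra $\mathbb F_{q^m}$ is a field. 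A standard argument (the de Groote / Alder--Strassen bound) then gives $\mu_q(m)\ge 2\dim-1$ for division algebras: the rank of the multiplication tensor of a simple algebra $A$ is at least $2\dim A-1$. Concretely, I would use the substitution method on the first factor. Take an $(m-1)$-dimensional subspace $U=\ker$ of some of the $\phi_i$. Restricted to $U\times\mathbb F_{q^m}$, the tensor is still concise in the output, because $u\beta\neq0$ for $u\neq0$. This forces at least $m$ further summands, giving $R\ge (m-1)+m$. The symmetric case follows since $\mu_q^{\sym}(m)\ge\mu_q(m)$.

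\textbf{Sufficiency of $q\geq 2m-2$.} I would use interpolation at $2m-1$ points of $\mathbb P^1(\mathbb F_q)$, which exist because $q+1\ge 2m-1$. Write $\alpha,\beta$ as polynomials $a,b$ of degree $<m$. Their product $c=ab$ has degree $\le 2m-2$ and is determined by its values at $2m-2$ affine points together with its leading coefficient (the point at infinity). Each evaluation $c(t_j)=a(t_j)b(t_j)$ is a product $\phi_j(\alpha)\phi_j(\beta)$ with the same linear form in both factors, and likewise at infinity. Reducing the interpolated $c$ modulo $p$ is $\mathbb F_q$-linear, so this yields a symmetric algorithm of length $2m-1$. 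Hence $\mu_q^{\sym}(m)\le 2m-1$, and equality holds by the lower bound.

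\textbf{Necessity of $q\geq 2m-2$.} Suppose $\mu_q(m)=2m-1$, and take an optimal decomposition. The main obstacle is showing that an optimal algorithm must essentially be an interpolation algorithm. For this I would invoke de Groote's characterization of optimal algorithms for simple field extensions, which the paper points to. It says the $2m-1$ forms $\phi_i$ correspond to evaluations at $2m-1$ distinct points of $\mathbb P^1(\mathbb F_q)$, up to equivalence, since any $m$ of them must be linearly independent (an MDS-type condition, i.e.\ a rational normal curve arc). Over $\mathbb F_q$ such an arc with $2m-1$ points in $\mathbb P^{m-1}$ requires $2m-1\le q+1$. I would derive this step via the Singleton/MDS-code bound applied to the code spanned by $(\phi_i)$, after showing independence of any $m$ of them. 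The independence follows from the substitution argument above: if $m$ of them were dependent, one could choose $U$ annihilating $m$ forms, and the remaining count would drop below $m$.
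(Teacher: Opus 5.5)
Your lower bound and your interpolation construction for $q\ge 2m-2$ are sound. One slip: in the substitution step, $U$ must be the common kernel of $m-1$ \emph{linearly independent} forms $\phi_i$. Such forms exist because the $\phi_i$ span $\mathbb F_{q^m}^{\vee}$. This $U$ is therefore one-dimensional, not $(m-1)$-dimensional, and then $\beta\mapsto u\beta$ has rank $m$ while at most $R-(m-1)$ summands survive.

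The genuine gap is in the necessity of $q\ge 2m-2$. Your substitution argument does correctly show that in a length-$(2m-1)$ algorithm any $m$ of the $\phi_i$ are linearly independent. Equivalently, $\{(\phi_i(\alpha))_i:\alpha\in\mathbb F_{q^m}\}$ is a $[2m-1,m]_q$ MDS code, i.e.\ an arc in $\mathrm{PG}(m-1,q)$. The Singleton bound cannot turn this into $2m-1\le q+1$: an MDS code attains it by definition, and it does not involve $q$.

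The elementary length bounds for nontrivial MDS codes are $n-k+1\le q$ and, dually, $k+1\le q$. Here they give only $q\ge m+1$, which is strictly weaker than $q\ge 2m-2$ once $m\ge 4$. For example, they do not exclude a $[7,4]_5$ MDS code, although $\mu_5(4)=8$. The bound $n\le q+1$ is the MDS conjecture, proved by Ball for prime $q$ but open in general. So your route reduces the theorem to an open problem rather than proving it.

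Likewise, ``arc'' does not give ``normal rational curve arc''. For $m\ge 4$, the $2m-1>m+2$ points in general position need not lie on a normal rational curve. So the independence property does not by itself place the $\phi_i$ at points of $\mathbb P^1(\mathbb F_q)$, and the ``since'' in your appeal to de Groote is not a valid inference.

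What is missing is the structural part of de Groote's theorem. It says every optimal algorithm for the simple extension $\mathbb F_{q^m}$ is equivalent to one obtained from polynomial multiplication by interpolation at $2m-1$ distinct points of $\mathbb P^1(\mathbb F_q)$, which gives $2m-1\le q+1$ at once. Its proof uses the full bilinear structure, including the $\psi_i$ and $w_i$, rather than only general position of the $\phi_i$.

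The paper does not prove this either. It states the theorem as a consequence of de Groote's results combined with Proposition~14.47 of B\"urgisser--Clausen--Shokrollahi. So you have two options. You can cite de Groote as a black box, in which case your MDS step is superfluous and your argument coincides with the paper's. Otherwise you must supply that structural argument, because the MDS reasoning cannot replace it.
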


\begin{remark}\label{rem:burg}
    As already observed in~\cite[Section~9.1]{ballet2021tensor}, the equality case in the theorem above also implies $\mu_q(m)=\mu_q^\sym(m)=2m-1$ if and only if $q\geq 2m-2$.
\end{remark}

Shokrollahi~\cite{shokrollahi1992optimal} established the strict inequality, and Chaumine~\cite{chaumine2006bilinear} proved the following result by applying the D. V. Chudnovsky and G. V. Chudnovsky algorithm~\cite{chudnovsky1988algebraic} to suitably chosen elliptic curves.

\begin{theorem}\label{thm:shok}
    We have $\mu_q^\sym(m)=2m$ if 
    \begin{equation*}
        \frac{1}{2}q+1<m\leq \frac{1}{2}(q+1+\epsilon(q))
    \end{equation*}
    where
    \begin{equation*}
        \epsilon(q)=\begin{cases}
            \text{the greatest integer $\leq 2\sqrt{q}$ prime to $q$} & \text{if $q$ is not a perfect square},\\
            \text{$2\sqrt{q}$ if $q$ is a perfect square}.
        \end{cases}
    \end{equation*}
    In particular, in this case we have $\mu_q(m)=\mu_q^\sym(m)$.
\end{theorem}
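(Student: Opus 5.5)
The statement combines two facts. The first is the Chaumine--Shokrollahi value $\mu_q^\sym(m)=2m$ in the range $\frac12 q+1<m\leq\frac12(q+1+\epsilon(q))$. The second, the ``in particular'' clause, is the equality $\mu_q(m)=\mu_q^\sym(m)$. I would treat the first as an upper bound plus a lower bound, and deduce the second from these using Theorem~\ref{thm:burg}.

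For the lower bound, the hypothesis $m>\frac12 q+1$ gives $2m-2>q$, so $q<2m-2$. Theorem~\ref{thm:burg} gives $\mu_q(m)\geq 2m-1$. Its equality clause says that $\mu_q(m)=2m-1$ holds only when $q\geq 2m-2$, so in our range $\mu_q(m)\neq 2m-1$. Hence $\mu_q(m)\geq 2m$, and since $\mu_q(m)\le\mu_q^\sym(m)$ we get $\mu_q^\sym(m)\geq 2m$ as well.

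For the upper bound $\mu_q^\sym(m)\le 2m$, I would use the symmetric Chudnovsky--Chudnovsky interpolation algorithm on an elliptic curve $E/\mathbb F_q$. The steps, in order, are:
\begin{enumerate}
\item Choose $E$ with at least $2m$ rational points, and pick a place $Q$ of degree $m$, so that the residue field of $Q$ is $\mathbb F_{q^m}$.
\item Choose a divisor $D$ of degree $m+g-1=m$ (since $g=1$) such that evaluation at $Q$ from $L(D)$ onto $\mathbb F_{q^m}$ is surjective.
\item Check that evaluation at $2m$ rational points $P_1,\dots,P_{2m}$ is injective on $L(2D)$; this holds when $\deg 2D<2m$ fails to cause a kernel, i.e.\ one needs $\ell(2D-\sum P_i)=0$.
\item Multiply $x,y\in\mathbb F_{q^m}$ by lifting both to $L(D)$ through one fixed linear section, evaluating at the $P_i$, multiplying the values, interpolating in $L(2D)$, and reducing at $Q$. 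Because the same lift and the same evaluation are applied to both factors, every summand has the form $\phi_i\otimes\phi_i$, so the algorithm is symmetric and has length $2m$.
\end{enumerate}

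The existence of a suitable $E$ is where the hypothesis $m\le\frac12(q+1+\epsilon(q))$ enters. By the Deuring--Waterhouse classification of possible orders $q+1-t$, with $|t|\le 2\sqrt q$ and the coprimality condition on $t$ for non-square $q$, there is a curve with $q+1+\epsilon(q)\geq 2m$ points. The existence of a degree-$m$ place follows from standard counting of places on elliptic curves.

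The main obstacle is arranging step~3 simultaneously with the surjectivity in step~2. Since $\deg(2D-\sum P_i)=0$, this amounts to choosing $D$ so that $2D-\sum P_i$ is not principal. Here I would follow Chaumine's argument, which exploits the group structure of $E(\mathbb F_q)$ to pick $D$, possibly adjusted by a rational point, avoiding a finite set of bad classes.

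Finally, the ``in particular'' clause: $2m\le\mu_q(m)\le\mu_q^\sym(m)=2m$ forces $\mu_q(m)=\mu_q^\sym(m)$.
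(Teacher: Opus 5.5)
Your proposal is correct and follows the same route the paper relies on. The paper gives no proof of its own and only cites the Shokrollahi--Chaumine argument, which is exactly your structure: the lower bound $\mu_q(m)\ge 2m$ from the equality case of Theorem~\ref{thm:burg} (since $q<2m-2$ in this range), and the upper bound from the symmetric Chudnovsky--Chudnovsky algorithm on an elliptic curve with $q+1+\epsilon(q)\ge 2m$ rational points.

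The step you defer to Chaumine closes by a direct count. $\mathrm{Pic}^m(E)$ has $|E(\mathbb F_q)|\ge 2m\ge 6$ classes, and the bad ones, $[D]=[Q]$ and $2[D]=[P_1+\cdots+P_{2m}]$, number at most $1+|E(\mathbb F_q)[2]|\le 5$. Any good class then has a representative $D$ whose support avoids $Q,P_1,\dots,P_{2m}$.
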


Moreover, for values of $m$ not covered by Theorems~\ref{thm:burg} and~\ref{thm:shok}, only a few particular exact values of $\mu_q^\sym(m)$ are known, all of which are obtained in~\cite{chudnovsky1988algebraic}. On the other hand, numerous upper bounds on $\mu_q^\sym(m)$ have been established in the literature. A list of the best known bounds for $q\in\{2,3,4\}$ can be found in~\cite[Table~2]{ballet2021tensor}.We will also use the following matrix-slice description of a tensor. Let
\(t=M_{q^m}\), and fix coordinates so that \(t=(t_{i,j,\ell})\), with
\(i,j,\ell\in\{1,\ldots,m\}\). For each \(i\in\{1,\ldots,m\}\), let
\[
t_i=(t_{i,j,\ell})_{j,\ell}\in \mathbb F_q^{m\times m}.
\]
We call the ordered tuple
\[
\operatorname{ss}_1(t)=(t_1,\ldots,t_m)
\]
the \textbf{first slice} of \(t\). With a slight abuse of notation, we will also write
\(\operatorname{ss}_1(t)\) for the set~\(\{t_1,\ldots,t_m\}\), or for the
\(\mathbb F_q\)-linear space spanned by these matrices, whenever the meaning is clear from
the context. The next result is~\cite[Proposition~14.45]{burgisser2013algebraic} applied to $t$ and gives a characterization of the tensor rank.

\begin{proposition}\label{prop:trk}
    Let $R$ be a positive integer. The following are equivalent.
    \begin{enumerate}
        \item $\mu_q(m)\leq R$.
        \item There exist rank-one matrices $A_1\ldots,A_R\in\fq^{m\times m}$ with $\ss_1(t)\leq\langle A_1\ldots,A_R\rangle$.
    \end{enumerate}
\end{proposition}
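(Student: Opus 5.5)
The plan is to prove both implications directly by passing between a decomposition of $t=M_{q^m}$ and its first slice. Fix an $\mathbb F_q$-basis of $\mathbb F_{q^m}$ and the corresponding dual basis of $\mathbb F_{q^m}^\vee$. In these coordinates a rank-one tensor $w\otimes\phi\otimes\psi$ has entries $w_i\phi_j\psi_\ell$. Its $i$-th slice is therefore $w_i\,\phi\psi^T$, a scalar multiple of the single rank-one matrix $\phi\psi^T$.

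For (1)$\Rightarrow$(2), take an algorithm $\{\phi_k,\psi_k,w_k\}_{k=1}^R$ as in~\eqref{eq:tM}. Since slicing is linear, each slice is
\[
t_i=\sum_{k=1}^R (w_k)_i\,\phi_k\psi_k^T.
\]
Set $A_k=\phi_k\psi_k^T$. If some $\phi_k$ or $\psi_k$ is zero, that term contributes nothing and can be replaced by any rank-one matrix, so we may assume every $A_k$ has rank exactly one. Then every $t_i$ lies in $\langle A_1,\ldots,A_R\rangle$, which is (2).

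For (2)$\Rightarrow$(1), suppose $A_1,\ldots,A_R$ are rank-one matrices containing the first slice in their span. Write $A_k=u_kv_k^T$ with $u_k,v_k\in\mathbb F_q^m$, and expand each slice as $t_i=\sum_k c_{i,k}A_k$ with $c_{i,k}\in\mathbb F_q$. Let $w_k\in\mathbb F_{q^m}$ be the element whose coordinate vector is $(c_{1,k},\ldots,c_{m,k})$, and let $\phi_k,\psi_k$ be the linear forms with coordinate vectors $u_k,v_k$. Comparing entries gives
\[
t_{i,j,\ell}=\sum_k (w_k)_i(\phi_k)_j(\psi_k)_\ell,
\]
so $t=\sum_k w_k\otimes\phi_k\otimes\psi_k$. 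This is an algorithm of length $R$ in the sense of~\eqref{eq:mult}, and hence $\mu_q(m)\le R$.

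I do not expect a real obstacle: this is the standard argument of~\cite[Proposition~14.45]{burgisser2013algebraic}. The only care needed is in the index conventions, namely which tensor factor is sliced and how the coordinates of $w_k$ become coefficients. One must check that the first slice is indexed by the $\mathbb F_{q^m}$ output factor $w$, which matches the ordering in~\eqref{eq:tM}.
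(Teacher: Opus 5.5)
Your argument is correct and is the standard proof of \cite[Proposition~14.45]{burgisser2013algebraic}. The paper invokes that proposition for $t=M_{q^m}$ without writing out a proof, and your write-up supplies exactly that argument: slice a decomposition to get the rank-one matrices $\phi_k\psi_k^T$, and conversely assemble the slice coefficients with respect to $A_k=u_kv_k^T$ into the elements $w_k$. The one bookkeeping point, padding a decomposition of length $\mu_q(m)<R$ up to exactly $R$ rank-one matrices, is already handled by your remark about replacing degenerate terms with arbitrary rank-one matrices.
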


In particular, \(\mu_q(m)=R\) if and only if \(R\) is the smallest integer for which
there exist rank-one matrices \(A_1,\ldots,A_R\in\mathbb F_q^{m\times m}\) satisfying the containment above. In this case, we call
\(\{A_1,\ldots,A_R\}\) a \textbf{tensor-rank decomposition} of \(t\).
Similar arguments applied to symmetric tensors yield the characterization below. We recall that $M_{q^m}$ admits a symmetric tensor representation, as it is a symmetric tensor, and therefore its $1$-slice is a sequence of symmetric matrices.

\begin{proposition}\label{prop:symtrk}
    Let $R$ be a positive integer. The following are equivalent.
    \begin{enumerate}
        \item $\mu_q^\sym(m)\leq R$.
        \item There exist symmetric matrices $\hat A_1\ldots,\hat A_R\in\fq^{m\times m}$ of rank one with $\ss_1(t^\sym)\leq\langle \hat A_1\ldots,\hat A_R\rangle$.
    \end{enumerate}
\end{proposition}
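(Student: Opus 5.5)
We prove the equivalence by passing between a symmetric decomposition of $t=M_{q^m}$ and a family of rank-one symmetric matrices, in the same way that Proposition~\ref{prop:trk} is proved in \cite[Proposition~14.45]{burgisser2013algebraic}. Throughout, the symmetry of the factors is tracked explicitly. Fix an $\mathbb F_q$-basis $e_1,\dots,e_m$ of $\mathbb F_{q^m}$ and coordinates in which $t=(t_{i,j,\ell})$ with $\alpha\beta=\sum_i\big(\sum_{j,\ell}t_{i,j,\ell}\alpha_j\beta_\ell\big)e_i$. Commutativity gives $t_{i,j,\ell}=t_{i,\ell,j}$, so each slice $t_i$ is symmetric. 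These symmetric slices are what we call $\ss_1(t^\sym)$. A symmetric algorithm $\{\phi_k,\phi_k,w_k\}_{k=1}^R$ for $M_{q^m}$ then corresponds exactly to the coordinate identity
\[
t_i=\sum_{k=1}^R (w_k)_i\,\phi_k\phi_k^{T}\quad\text{for every } i\in\{1,\ldots,m\},
\]
where $\phi_k\in\mathbb F_q^m$ is the coordinate vector of the linear form $\phi_k$ and $(w_k)_i$ is the $i$-th coordinate of $w_k$. To see this, expand \eqref{eq:mult} with $\psi_k=\phi_k$ and compare the coefficients of $\alpha_j\beta_\ell e_i$.

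For (1)$\Rightarrow$(2), take a symmetric algorithm of length $R$ and set $\hat A_k=\phi_k\phi_k^T$. Each $\hat A_k$ is symmetric, and it has rank one provided $\phi_k\neq 0$. Summands with $\phi_k=0$ contribute nothing, so we discard them; if fewer than $R$ matrices remain, we pad with arbitrary rank-one symmetric matrices such as $E_{11}$. The displayed identity then shows that every $t_i$ lies in $\langle\hat A_1,\dots,\hat A_R\rangle$.

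For (2)$\Rightarrow$(1), we first use the fact that every rank-one symmetric matrix over $\mathbb F_q$ has the form $\hat A=c\,vv^T$ with $c\in\mathbb F_q^*$ and $v\in\mathbb F_q^m$. Indeed, a rank-one matrix is $uv^T$ with $u,v\neq0$, and symmetry $uv^T=vu^T$ forces $u=cv$. This is the matrix counterpart of Proposition~\ref{prop:rank1sym}. Replacing $\hat A_k$ by $vv^T$ changes neither the span nor the rank, so we may take $c=1$. By hypothesis, for each $i$ there are scalars $\lambda_{i,k}\in\mathbb F_q$ with $t_i=\sum_k\lambda_{i,k}v_kv_k^T$. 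We define $\phi_k$ as the linear form with coordinate vector $v_k$ and set $w_k=\sum_i\lambda_{i,k}e_i$. Reading the displayed identity backwards then gives a symmetric algorithm of length $R$, so $\mu_q^\sym(m)\le R$.

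The only delicate point is this normalization $c\,vv^T$. It is essential that the scalar $c$ can be absorbed into the coefficients $\lambda_{i,k}$, and hence into $w_k$, rather than into the linear form. Absorbing $c$ into $\phi_k$ would require a square root of $c$, which need not exist in $\mathbb F_q$ when $q$ is odd. With that point handled, both directions reduce to the bookkeeping identity displayed above.
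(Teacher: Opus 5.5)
Your proof is correct and takes essentially the paper's approach. The paper only says that the argument of \cite[Proposition~14.45]{burgisser2013algebraic} carries over to symmetric tensors, and you carry out exactly that adaptation via the slice identity $t_i=\sum_k (w_k)_i\,\phi_k\phi_k^{T}$, including the point the paper leaves implicit: writing $\hat A_k=c_k v_kv_k^{T}$ and absorbing $c_k$ into the coefficients (hence into $w_k$) rather than into $\phi_k$.
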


As above, when \(R=\mu_q^{\mathrm{sym}}(m)\), we call such a collection
\(\{\widehat A_1,\ldots,\widehat A_R\}\) a \textbf{symmetric tensor-rank decomposition}
of \(M_{q^m}\). Although several ad hoc constructions are known for particular parameters, determining
explicit tensor-rank or symmetric tensor-rank decompositions of \(M_{q^m}\) remains a
difficult open problem in general. In~\cite{kruskal1977three}, Kruskal proved a lower bound on the tensor rank of \(3\)-tensors. In the
language of matrix spaces, this bound can be written as
\[
\mu_q(m)\geq
\dim\big(\langle \operatorname{ss}_1(t)\rangle_{\mathbb F_q}\big)
+
\min\{\operatorname{rk}(M):0\neq M\in
\langle \operatorname{ss}_1(t)\rangle_{\mathbb F_q}\}
-1.
\]
Combining Kruskal's bound with Theorems~2.2 and~2.4, Remark~2.3, \cite[Table~1]{ballet2021tensor} and the
upper bounds collected in \cite[Table~2]{ballet2021tensor}, we obtain the values and intervals reported in
Table~1 for \(\mu_q^{\mathrm{sym}}(m)\), for selected values of \(q\) and \(m\). When the available bounds determine a unique value, we record that value. Otherwise,
we write \(a\)--\(b\) to indicate that \(\mu_q^{\mathrm{sym}}(m)\in\{a,a+1,\ldots,b\}\).
\begin{table}[h]
\centering
\renewcommand{\arraystretch}{1.2}
\begin{tabular}{|c|c|c|c|c|c|c|c|c|c|}
\hline
\diagbox{$q$}{$m$} & 2 & 3 & 4 & 5 & 6 & 7 & 8 & 9 & 10\\\hline
2 & 3 & 6 & 9 & 10 -- 13 & 15 & 14 -- 22 & 16 -- 24 & 18 -- 30 & 20 -- 33\\\hline
3 & 3 & 6 & 8 -- 9 & 10 -- 11 & 12 -- 15 & 14 -- 19 & 16 -- 21 & 18 -- 26 & 20 -- 27 \\\hline
4 & 3 & 5 & 8 & 10 -- 11 & 12 -- 14 & 14 -- 17 & 16 -- 20 & 18 -- 23 & 20 -- 27\\\hline
\end{tabular}
\caption{Known values and intervals for $\mu_q^{\mathrm{sym}}(m)$.}
\label{table:values}
\end{table}

The study of tensor rank and tensor decompositions of \(M_{q^m}\) is central in algebraic complexity theory, but it also has important connections with other areas. One such connection is with coding theory: low tensor-rank codes in the rank metric have been shown to be useful for storage and encoding applications~\cite{byrne2019tensor}. Motivated by Propositions~2.5 and~2.6, one can define the tensor rank, respectively the symmetric tensor rank, of a rank-metric code as the minimum number of rank-one, respectively rank-one symmetric, matrices whose \(\mathbb F_q\)-span contains the code. We return to this point in Section~\ref{sec:rankmetriccodes}. 

In~\cite[Proposition~5.13 and Lemma~5.14]{byrne2019tensor}, the authors show that the first-slice space \(\langle \operatorname{ss}_1(t)\rangle_{\mathbb F_q}\) is equivalent to a \textit{1-dimensional
Gabidulin code}. In the remainder of this section, we recall this connection and reformulate symmetric tensor decompositions of \(M_{q^m}\) in terms of linearized polynomials. The latter form the foundation of the techniques introduced in Section~\ref{sec:symdec}. We denote by $\mathcal{G}_1(m,q)=\langle x\rangle_{\mathbb F_{q^m}}$ the 1-dimensional Gabidulin code. Since every map \(a x\), with
\(a\in\mathbb F_{q^m}\), is self-adjoint with respect to the trace form, we have
\[
\mathcal{G}_1(m,q)\leq \mathcal{S}_q(m).
\]
Equivalently, after choosing a basis of \(\mathbb F_{q^m}\) over \(\mathbb F_q\) and the
corresponding trace-dual basis, the elements of \(\mathcal{G}_1(m,q)\) are represented by symmetric
matrices. On the other hand, with respect to the usual basis representation, \(\mathcal{G}_1(m,q)\) is
isomorphic to
\[
\langle I,M,\ldots,M^{m-1}\rangle_{\mathbb F_q},
\]
where \(M\in\mathbb F_q^{m\times m}\) is the companion matrix of the minimal polynomial
of a primitive element of \(\mathbb F_{q^m}\). We now combine the slice characterization above with the description of rank-one
symmetric linearized polynomials from Proposition~\ref{prop:rank1sym}. Such polynomials correspond,
under the field trace, to rank-one symmetric bilinear forms on
\(\mathbb F_{q^m}\). Equivalently, after choosing a basis and the corresponding
trace-dual basis, they are represented by rank-one symmetric matrices. Using the identification of the first-slice space of \(M_{q^m}\) with
\(\mathcal{G}_1(m,q)=\langle x\rangle_{\mathbb F_{q^m}}\), we obtain the following reformulation of
symmetric tensor decompositions.

\begin{proposition}\label{prop:<=Rwithlinpol}
    Let $R$ be a positive integer. The following are equivalent.
    \begin{enumerate}
        \item $\mu_q^\sym(m)\leq R$.
        \item there exist $\alpha_1,\ldots,\alpha_R,\beta_1,\ldots,\beta_R\in\fqm^*$ such that $\alpha_i/\beta_i\in\Fq$, for any $i\in\{1,\ldots,R\}$, and $\langle x\rangle_{\fqm}\leq\langle \alpha_1\Tr_{q^m/q}(\beta_1 x),\ldots, \alpha_R\Tr_{q^m/q}(\beta_R x)\rangle_{\fq}$.
    \end{enumerate}
\end{proposition}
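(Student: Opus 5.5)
The plan is to chain Proposition~\ref{prop:symtrk} with the rank- and symmetry-preserving identification of Section~\ref{sec:1}, and with Proposition~\ref{prop:rank1sym}. The whole argument is a translation of condition 2 of Proposition~\ref{prop:symtrk} into the language of linearized polynomials.

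First, fix an $\mathbb F_q$-basis $\mathcal B$ of $\mathbb F_{q^m}$ and its trace-dual basis $\mathcal B^*$. Consider the map sending $f\in\mathcal L_{m,q}$ to the matrix of $f$ obtained by evaluating on $\mathcal B^*$ and expanding in $\mathcal B$. Equivalently, this is the Gram matrix of $B_f(x,y)=\operatorname{Tr}_{q^m/q}(f(x)y)$ with respect to $\mathcal B^*$. This map is an $\mathbb F_q$-linear bijection $\mathcal L_{m,q}\to\mathbb F_q^{m\times m}$ by Proposition~\ref{prop:bilform}. It preserves rank, by the isometry theorem for $\varphi$ (the radical of $B_f$ equals $\ker f$). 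By Proposition~\ref{prop:bilform2}, it maps $\mathcal S_q(m)$ exactly onto the symmetric matrices.

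The key step is to check that, under this identification, the $\mathbb F_q$-span of $\operatorname{ss}_1(t^{\mathrm{sym}})$ is the image of $\langle x\rangle_{\mathbb F_{q^m}}=\mathcal G_1(m,q)$. The symmetric representation of $M_{q^m}$ uses trace-dual coordinates. The slices are then the Gram matrices of the forms $(\alpha,\beta)\mapsto \operatorname{Tr}_{q^m/q}(b_i^*\alpha\beta)$ for $b_i^*\in\mathcal B^*$, and these correspond to the polynomials $b_i^*x$. Their $\mathbb F_q$-span is $\langle x\rangle_{\mathbb F_{q^m}}$, since $\mathcal B^*$ spans $\mathbb F_{q^m}$ over $\mathbb F_q$. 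This is the equivalence recalled from \cite[Proposition~5.13, Lemma~5.14]{byrne2019tensor}. I expect this to be the only point needing care: one must make the chosen coordinates of the symmetric tensor explicit, so that the slices are genuinely the matrices of the multiplication maps $x\mapsto ax$ in the same basis convention.

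With this in place, the two directions are as follows.
\begin{itemize}
\item Assume $\mu_q^{\mathrm{sym}}(m)\le R$. Proposition~\ref{prop:symtrk} gives rank-one symmetric matrices $\hat A_1,\ldots,\hat A_R$ whose span contains the slice space. Their preimages $f_1,\ldots,f_R$ are rank-one elements of $\mathcal S_q(m)$, and $\langle x\rangle_{\mathbb F_{q^m}}\le\langle f_1,\ldots,f_R\rangle_{\mathbb F_q}$ by linearity of the bijection. Proposition~\ref{prop:rank1sym} then writes each $f_i=\alpha_i\operatorname{Tr}_{q^m/q}(\beta_i x)$ with $\alpha_i/\beta_i\in\mathbb F_q$.
\item Conversely, given such $\alpha_i,\beta_i$, Proposition~\ref{prop:rank1sym} says each $\alpha_i\operatorname{Tr}_{q^m/q}(\beta_i x)$ is a rank-one symmetric linearized polynomial. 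Its image is therefore a rank-one symmetric matrix, and the containment transfers to the matrix side. Proposition~\ref{prop:symtrk} then yields $\mu_q^{\mathrm{sym}}(m)\le R$.
\end{itemize}
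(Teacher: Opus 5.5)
Your proposal is correct and takes the same route the paper indicates. You combine Proposition~\ref{prop:symtrk} with the trace-dual-basis identification of symmetric matrices with $\mathcal S_q(m)$, under which the first-slice space becomes $\langle x\rangle_{\mathbb F_{q^m}}$, and then use Proposition~\ref{prop:rank1sym} to describe the rank-one symmetric elements; since the paper only sketches this reformulation in the preceding paragraph, your explicit check that the slices correspond to the polynomials $b_i^*x$ supplies the detail it leaves implicit.
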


In the next section, we use Proposition~\ref{prop:<=Rwithlinpol} to develop an explicit procedure for
constructing symmetric tensor decompositions of \(M_{q^m}\) in terms of linearized polynomials. These decompositions can then be translated into matrix, tensor, or bilinear-form decompositions via the correspondences described in Section~\ref{sec:1}.

\section{Symmetric tensor decompositions}\label{sec:symdec}

In this section, we develop a constructive approach to symmetric tensor
decompositions of the multiplication map \(M_{q^m}\) using linearized
polynomials. Starting from Proposition~\ref{prop:<=Rwithlinpol},  we reformulate the search for
a symmetric tensor decomposition as the search for elements of
\(\mathbb F_{q^m}\) satisfying explicit linear conditions over \(\mathbb F_q\). We then exploit the Frobenius automorphism to replace these conditions by equivalent enlarged linear systems. This formulation is well suited for
explicit computations and will be used in Section~\ref{sec:4} to construct concrete decompositions for small extension degrees.

More precisely, Proposition~\ref{prop:<=Rwithlinpol} shows that computing
\(\mu_q^{\mathrm{sym}}(m)\) is equivalent to finding the smallest integer \(R\)
for which there exist $\alpha_1,\ldots,\alpha_R,\beta_1,\ldots,\beta_R\in\mathbb F_{q^m}^{*}$ such that \(\alpha_i/\beta_i\in\mathbb F_q\) for every \(i\), and
\[
\langle x\rangle_{\mathbb F_{q^m}}
\subseteq
\left\langle
\alpha_1\operatorname{Tr}_{q^m/q}(\beta_1x),\ldots,
\alpha_R\operatorname{Tr}_{q^m/q}(\beta_Rx)
\right\rangle_{\mathbb F_q}.
\]
Whenever such elements are found, the corresponding rank-one symmetric
linearized polynomials $\alpha_i\operatorname{Tr}_{q^m/q}(\beta_i x)$, $i\in\{1,\ldots,R\}$, yield a symmetric tensor-rank decomposition of~\(M_{q^m}\). The following proposition translates this condition into coordinates with respect to the standard \(q\)-polynomial basis.

\begin{proposition}\label{prop:linsymtensorrank}
Let \(R\) be a positive integer, and let \(\xi\) be a generator of
\(\mathbb F_{q^m}\) over \(\mathbb F_q\). Then the following are equivalent.
\begin{enumerate}
    \item \(\mu_q^{\mathrm{sym}}(m)\leq R\).
    \item There exist \(\alpha_1,\ldots,\alpha_R\in\mathbb F_{q^m}^{*}\) such that
    \begin{multline*}
        \left\langle
    (1,0,\ldots,0),(\xi,0,\ldots,0),\ldots,(\xi^{m-1},0,\ldots,0)
    \right\rangle_{\mathbb F_q}
   \\ \subseteq
    \left\langle
    (\alpha_1^2,\alpha_1^{1+q},\ldots,\alpha_1^{1+q^{m-1}}),
    \ldots,(\alpha_R^2,\alpha_R^{1+q},\ldots,\alpha_R^{1+q^{m-1}})
    \right\rangle_{\mathbb F_q}.
    \end{multline*}
\end{enumerate}
\end{proposition}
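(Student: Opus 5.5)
Start from Proposition~\ref{prop:<=Rwithlinpol} and write everything in the coordinates $\mathcal L_{m,q}\cong\mathbb F_{q^m}^m$, where $\sum_{i=0}^{m-1}c_ix^{q^i}$ is sent to $(c_0,\ldots,c_{m-1})$. This map is $\mathbb F_q$-linear, so inclusions of $\mathbb F_q$-spans are preserved in both directions.

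In these coordinates, $\langle x\rangle_{\mathbb F_{q^m}}$ is the set of vectors $(a,0,\ldots,0)$ with $a\in\mathbb F_{q^m}$. Since $\xi$ generates $\mathbb F_{q^m}$ over $\mathbb F_q$, the powers $1,\xi,\ldots,\xi^{m-1}$ form an $\mathbb F_q$-basis of $\mathbb F_{q^m}$. Hence $\langle x\rangle_{\mathbb F_{q^m}}$, as an $\mathbb F_q$-space, equals $\langle(\xi^j,0,\ldots,0):0\le j\le m-1\rangle_{\mathbb F_q}$. This gives the left-hand side of condition 2.

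Next, expand a rank-one symmetric polynomial:
\[
\alpha\operatorname{Tr}_{q^m/q}(\beta x)=\sum_{i=0}^{m-1}\alpha\beta^{q^i}x^{q^i}.
\]
Its coordinate vector is $(\alpha\beta,\alpha\beta^q,\ldots,\alpha\beta^{q^{m-1}})$. By Proposition~\ref{prop:rank1sym}, write $\alpha=\lambda\beta$ with $\lambda\in\mathbb F_q^*$. Then the vector equals $\lambda(\beta^2,\beta^{1+q},\ldots,\beta^{1+q^{m-1}})$. Because $\lambda$ is a nonzero scalar of $\mathbb F_q$, this vector spans the same $\mathbb F_q$-line as $(\beta^2,\ldots,\beta^{1+q^{m-1}})$. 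So in the span of Proposition~\ref{prop:<=Rwithlinpol} each generator may be replaced by this normalized vector; renaming $\beta_i$ as $\alpha_i$ gives the right-hand side of condition 2.

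For the converse, let $\alpha_1,\ldots,\alpha_R$ satisfy condition 2. Take $\beta_i=\alpha_i$, so the ratio is $1\in\mathbb F_q$. Reading the computation above backwards shows that the polynomials $\alpha_i\operatorname{Tr}_{q^m/q}(\alpha_ix)$ satisfy the inclusion in Proposition~\ref{prop:<=Rwithlinpol}, so $\mu_q^{\mathrm{sym}}(m)\le R$.

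No step is a real obstacle; the proof is bookkeeping. The only point needing care is that the normalization by $\lambda$ uses $\mathbb F_q$-scalars, so the $\mathbb F_q$-spans coincide exactly. One should also note that the coordinate map is injective on the representatives modulo $x^{q^m}-x$, so inclusion of spans transfers in both directions.
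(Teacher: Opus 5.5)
Your proposal is correct and follows essentially the same route as the paper. You normalize each generator so that $\beta_i=\alpha_i$ up to an $\mathbb F_q^*$-scalar (Proposition~\ref{prop:rank1sym}), then apply the injective $\mathbb F_q$-linear coordinate map on $(x,x^q,\ldots,x^{q^{m-1}})$, which sends $\langle x\rangle_{\mathbb F_{q^m}}$ to the span of the $(\xi^j,0,\ldots,0)$ and $\alpha_i\operatorname{Tr}_{q^m/q}(\alpha_i x)$ to $(\alpha_i^2,\alpha_i^{1+q},\ldots,\alpha_i^{1+q^{m-1}})$; your explicit note that injectivity makes the span inclusion transfer in both directions is a point the paper leaves implicit.
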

\begin{proof}
By Proposition~\ref{prop:<=Rwithlinpol}, \(\mu_q^{\mathrm{sym}}(m)\le R\) if and only if there exist
\(\alpha_1,\ldots,\alpha_R,\beta_1,\ldots,\beta_R\in\mathbb F_{q^m}^{*}\), with
\(\alpha_i/\beta_i\in\mathbb F_q^{*}\) for every \(i\), such that
\[
\langle x\rangle_{\mathbb F_{q^m}}
\subseteq
\left\langle
\alpha_1\operatorname{Tr}_{q^m/q}(\beta_1x),\ldots,
\alpha_R\operatorname{Tr}_{q^m/q}(\beta_Rx)
\right\rangle_{\mathbb F_q}.
\]
Since \(\alpha_i/\beta_i\in\mathbb F_q^{*}\), we may assume, up to multiplication by
nonzero scalars in \(\mathbb F_q\), that \(\beta_i=\alpha_i\) for every \(i\). Let $\mathcal B=(x,x^q,\ldots,x^{q^{m-1}})$ and consider the coordinate map
\[
c_{\mathcal B}:\mathcal{L}_{m,q}\longrightarrow \mathbb F_{q^m}^m:
a_0x+\cdots+a_{m-1}x^{q^{m-1}}
\longmapsto (a_0,\ldots,a_{m-1}).
\]
This map is injective and \(\mathbb F_q\)-linear. Hence
\[
c_{\mathcal B}(\langle x\rangle_{\mathbb F_{q^m}})
=
\left\langle
(1,0,\ldots,0),(\xi,0,\ldots,0),\ldots,
(\xi^{m-1},0,\ldots,0)
\right\rangle_{\mathbb F_q}.
\]
Moreover, $c_{\mathcal B}\big(\alpha_i\operatorname{Tr}_{q^m/q}(\alpha_i x)\big)
=
(\alpha_i^2,\alpha_i^{1+q},\ldots,\alpha_i^{1+q^{m-1}})$ for every \(i\). Therefore the containment in Proposition~\ref{prop:<=Rwithlinpol} is equivalent to
\begin{multline*}
\left\langle
(1,0,\ldots,0),(\xi,0,\ldots,0),\ldots,(\xi^{m-1},0,\ldots,0)
\right\rangle_{\mathbb F_q}\\
\subseteq
\left\langle
(\alpha_1^2,\alpha_1^{1+q},\ldots,\alpha_1^{1+q^{m-1}}),
\ldots,
(\alpha_R^2,\alpha_R^{1+q},\ldots,\alpha_R^{1+q^{m-1}})
\right\rangle_{\mathbb F_q}.
\end{multline*}
The statement follows.
\end{proof}

Thus, the problem of bounding \(\mu_q^{\mathrm{sym}}(m)\), or of constructing an
explicit symmetric tensor decomposition of \(M_{q^m}\), reduces to finding elements
\(\alpha_1,\ldots,\alpha_R\in\mathbb F_{q^m}^{*}\) satisfying the coordinate
containment in Proposition~3.1. Equivalently, for every \(i\in\{0,\ldots,m-1\}\), one has to solve over
\(\mathbb F_q\) the linear system
\[
\Sigma_i \colon
\begin{pmatrix}
\alpha_1^2 & \alpha_2^2 & \cdots & \alpha_R^{2} \\
\alpha_1^{q+1} & \alpha_2^{q+1} & \cdots & \alpha_R^{q+1} \\
\vdots & \vdots & & \vdots \\
\alpha_1^{q^{m-1}+1} & \alpha_2^{q^{m-1}+1} & \cdots & \alpha_R^{q^{m-1}+1}
\end{pmatrix}
\begin{pmatrix}
x_1\\
x_2\\
\vdots\\
x_R
\end{pmatrix}
=
\begin{pmatrix}
\xi^i\\
0\\
\vdots\\
0
\end{pmatrix}.
\]
Here the unknowns \(x_1,\ldots,x_R\) are required to lie in \(\mathbb F_q\).
The next theorem reformulates this condition by enlarging the systems
\(\Sigma_i\) with their Frobenius conjugates, giving a criterion that is
convenient for explicit computations.

In what follows, we write $b_i=(\xi^i,0,\ldots,0)^{\top}\in \mathbb F_{q^m}^m$. For any integer \(j\), we denote by \(b_i^{[j]}\) the vector obtained by applying the \(q^j\)-Frobenius entrywise, namely $b_i^{[j]}=(\xi^{i q^j},0,\ldots,0)^{\top}$.

\begin{theorem}\label{thm:charactfinal}
Let \(R\) be a positive integer. The following are equivalent.
\begin{enumerate}
    \item \(\mu_q^{\mathrm{sym}}(m)\leq R\).
    \item There exist \(\alpha_1,\ldots,\alpha_R\in\mathbb F_{q^m}^{*}\) such that,
    for every \(i\in\{0,\ldots,m-1\}\), the system \(\Sigma_i\) admits a solution
    in \(\mathbb F_q^R\).
\end{enumerate}

Moreover, let
\[
A=
\begin{pmatrix}
\alpha_1^2 & \alpha_2^2 & \cdots & \alpha_R^2\\
\alpha_1^{q+1} & \alpha_2^{q+1} & \cdots & \alpha_R^{q+1}\\
\vdots & \vdots & & \vdots\\
\alpha_1^{q^{m-1}+1} & \alpha_2^{q^{m-1}+1} & \cdots & \alpha_R^{q^{m-1}+1}
\end{pmatrix}.
\]
If there exist \(\alpha_1,\ldots,\alpha_R\in\mathbb F_{q^m}^{*}\) such that, for every
\(i\in\{0,\ldots,m-1\}\), the enlarged system
\[
\Sigma_i^{*}\colon
\begin{pmatrix}
A\\
A^{[1]}\\
\vdots\\
A^{[m-1]}
\end{pmatrix}
X
=
\begin{pmatrix}
b_i\\
b_i^{[1]}\\
\vdots\\
b_i^{[m-1]}
\end{pmatrix}
\]
admits a solution \(X\in\mathbb F_q^R\), where \(A^{[j]}\) denotes the matrix obtained
from \(A\) by applying the~\(q^j\)-Frobenius entrywise, then $\mu_q^{\mathrm{sym}}(m)\leq R$.
\end{theorem}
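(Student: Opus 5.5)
The first equivalence is a direct rewriting of Proposition~\ref{prop:linsymtensorrank}, and I will treat it that way. Condition (2) of that proposition asks that each generator $(\xi^i,0,\ldots,0)$, for $i\in\{0,\ldots,m-1\}$, lie in the $\mathbb F_q$-span of the vectors $(\alpha_j^2,\alpha_j^{1+q},\ldots,\alpha_j^{1+q^{m-1}})$. The vectors $(\xi^i,0,\ldots,0)$ span the left-hand space, so containment of the span is equivalent to containment of each generator. Membership of $b_i$ in the $\mathbb F_q$-column span of $A$ says exactly that there is $X=(x_1,\ldots,x_R)^\top\in\mathbb F_q^R$ with $AX=b_i$. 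This is precisely solvability of $\Sigma_i$ over $\mathbb F_q$. Hence (1) holds if and only if (2) holds, with the same choice of $\alpha_1,\ldots,\alpha_R$.

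For the ``moreover'' part, I will show that any solution of $\Sigma_i^{*}$ in $\mathbb F_q^R$ is a solution of $\Sigma_i$. The first block row of $\Sigma_i^{*}$ is literally $AX=b_i$, so a solution $X\in\mathbb F_q^R$ of $\Sigma_i^{*}$ solves $\Sigma_i$. If this holds for every $i\in\{0,\ldots,m-1\}$, then condition (2) is satisfied, and the first part gives $\mu_q^{\sym}(m)\le R$.

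It is also worth recording that the enlarged system carries no extra constraint for $X\in\mathbb F_q^R$. Applying the $q^j$-Frobenius entrywise to $AX=b_i$ gives $A^{[j]}X^{[j]}=b_i^{[j]}$. Since $X$ has entries in $\mathbb F_q$, we have $X^{[j]}=X$, and so the remaining block rows hold automatically. Thus $\Sigma_i$ and $\Sigma_i^{*}$ have the same solutions in $\mathbb F_q^R$. This explains why the enlarged system may be used as a computational criterion: it is a square-ish system over $\mathbb F_{q^m}$ whose rows are stable under Frobenius. In that setting, existence of an $\mathbb F_{q^m}$-solution descends to an $\mathbb F_q$-solution by Galois descent, for example by taking a unique solution or averaging via the trace.

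No step is a real obstacle. The only point needing care is the descent observation: the $\mathbb F_q$-solution requirement is what makes the Frobenius conjugate rows redundant. The stated implication does not depend on the descent argument.
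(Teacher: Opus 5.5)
Your proposal is correct and matches the paper's proof. The equivalence is read off from Proposition~\ref{prop:linsymtensorrank} by checking the span containment generator by generator, and the final assertion holds because the first block row of $\Sigma_i^{*}$ is $\Sigma_i$ itself, with the Frobenius-conjugate rows automatic for $X\in\mathbb F_q^R$. Your extra descent remark is not in the paper but is sound, provided the ``averaging'' is taken as $\sum_{j=0}^{m-1}(\theta X)^{[j]}$ with $\Tr_{q^m/q}(\theta)=1$, since the naive average $\frac{1}{m}\sum_j X^{[j]}$ is unavailable when the characteristic divides $m$.
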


\begin{proof}
The equivalence follows directly from Proposition~3.1 and from the definition of the systems \(\Sigma_i\). For the final assertion, note first that, since \(X\in\mathbb F_q^R\), applying the
\(q^j\)-Frobenius to \(AX=b_i\) gives
\[
A^{[j]}X=b_i^{[j]}.
\]
Thus the enlarged system \(\Sigma_i^*\) is obtained by adjoining to \(\Sigma_i\) all its Frobenius conjugates.
Now suppose that, for every \(i\in\{0,\ldots,m-1\}\), the system \(\Sigma_i^*\)
admits a solution \(X\in\mathbb F_q^R\). Then its first block row gives \(AX=b_i\),
so \(\Sigma_i\) admits a solution in \(\mathbb F_q^R\) for every \(i\). By the
equivalence proved above, we get $\mu_q^{\mathrm{sym}}(m)\leq R$.
\end{proof}

In the next section, we apply this criterion to obtain explicit symmetric tensor decompositions for small values of \(m\).

\begin{remark}\label{rem:<=Rfinal}
Clearly, if we can find \(R\) elements \(\alpha_1,\ldots,\alpha_R\) satisfying the condition in Theorem~\ref{thm:charactfinal}, then
\(\mu_q^{\sym}(m)\leq R\). Moreover, the corresponding trace functions determine a
natural space in which to look for a tensor decomposition.
\end{remark}

\section{Decompositions for certain values of $m$}\label{sec:4}

In this section, we apply the framework developed above to construct explicit
symmetric tensor decompositions of the multiplication map \(M_{q^m}\) for small
values of \(m\). By specializing the systems from Theorem~\ref{thm:charactfinal},
we obtain concrete algebraic criteria that can be checked directly. This allows us to recover known values of the symmetric tensor rank in a unified
way and, in some cases, to provide explicit decompositions for parameters not previously covered by the general results. We treat the cases
\(m\in\{2,3,4\}\) separately, as the associated systems become more intricate
as \(m\) increases. The main tools of this section are Theorem~\ref{thm:charactfinal} and Remark~\ref{rem:<=Rfinal}.

\subsection{ Case $m=2$}

By Table \ref{table:values} we know that $\mu_q^{\sym}(2)=\mu_q(2)=3$. We recover this value using the criterion developed in the previous section and obtain
an explicit symmetric tensor-rank decomposition of the tensor \(M_{q^2}\).

\begin{theorem}\label{thm:m=2}
An optimal symmetric tensor-rank decomposition of \(M_{q^2}\) is given by
\begin{equation}\label{eq:3traces1,eta,eta2}
\left\{
\operatorname{Tr}_{q^2/q}(x),\,
\eta\operatorname{Tr}_{q^2/q}(\eta x),\,
\eta^2\operatorname{Tr}_{q^2/q}(\eta^2x)
\right\},
\end{equation}
where \(\eta\in\mathbb F_{q^2}^{*}\) satisfies
\[
\eta^{2q}-\eta^{2q-1}-\eta^{q+1}+\eta^{q-1}+\eta-1\neq 0.
\]
In particular, such an element \(\eta\) always exists.
\end{theorem}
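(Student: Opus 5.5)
The plan is to apply Theorem~\ref{thm:charactfinal} with $m=2$, $R=3$ and $(\alpha_1,\alpha_2,\alpha_3)=(1,\eta,\eta^2)$, which are exactly the three trace polynomials in~\eqref{eq:3traces1,eta,eta2}. Optimality then follows from the lower bound $\mu_q(2)\ge 2m-1=3$ of Theorem~\ref{thm:burg}, together with $\mu_q(m)\le\mu_q^{\sym}(m)$. So the content is to show that these three elements satisfy the spanning condition of Proposition~\ref{prop:linsymtensorrank}, and then that a suitable $\eta$ exists.

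First, the columns of $A$ are $v_j=(\alpha_j^2,\alpha_j^{q+1})^\top$. Their second coordinates are norms, so they lie in $\F_q$. Hence the $\F_q$-span of the columns lives in the $3$-dimensional $\F_q$-space $W=\F_{q^2}\times\F_q$. The target space $\langle b_0,b_1\rangle_{\F_q}=\F_{q^2}\times\{0\}$ is a $2$-dimensional subspace of $W$. It is therefore enough (and natural) to require that $v_1,v_2,v_3$ be $\F_q$-linearly independent, since then they span all of $W$. This single condition handles $\Sigma_0$ and $\Sigma_1$ simultaneously.

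To turn independence into a determinant, I would use the Frobenius enlargement as in $\Sigma_i^*$. Embed $W$ into $\F_{q^2}^3$ via $(a,n)\mapsto(a,a^q,n)$; vectors of $W$ are $\F_q$-independent if and only if their images are $\F_{q^2}$-independent. The resulting $3\times3$ matrix has columns $(1,1,1)^\top$, $(\eta^2,\eta^{2q},\eta^{q+1})^\top$ and $(\eta^4,\eta^{4q},\eta^{2q+2})^\top$. This is a Vandermonde matrix in the three values $\eta^2,\eta^{2q},\eta^{q+1}$, so its determinant is
$$(\eta^{2q}-\eta^2)(\eta^{q+1}-\eta^2)(\eta^{q+1}-\eta^{2q}).$$
After dividing by suitable powers of $\eta$ this becomes a multiple of $(\eta^{q-1}-1)^2(\eta^{q-1}+1)$.

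The main obstacle I expect is reconciling this factorization with the explicit polynomial $\eta^{2q}-\eta^{2q-1}-\eta^{q+1}+\eta^{q-1}+\eta-1$ in the statement. I would expand both expressions, or compute the determinant directly in the basis-coordinate form the authors likely used. The goal is to show that the stated polynomial does not vanish exactly when (or at least only when) $\eta^2\notin\F_q$, which is the condition that all three Vandermonde nodes are distinct. I am not certain the two forms match verbatim, so this bookkeeping needs care.

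For existence, I would take $\eta$ to be a generator of $\F_{q^2}^*$. Then $\eta^{2(q-1)}=1$ would force $q^2-1\mid 2(q-1)$, i.e.\ $q+1\mid 2$, which is impossible. Hence $\eta^2\notin\F_q$, the determinant is nonzero, and the decomposition exists for every $q$.
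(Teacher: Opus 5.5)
Your reduction is the same as the paper's. With $(\alpha_1,\alpha_2,\alpha_3)=(1,\eta,\eta^2)$, everything comes down to the $3\times 3$ Frobenius-enlarged matrix being nonsingular. Your dimension count in $W=\mathbb F_{q^2}\times\mathbb F_q$ is a tidy replacement for the paper's Cramer's rule plus Frobenius-invariance of the unique solution. Your Vandermonde evaluation is correct; exactly,
\[
\det=(\eta^{2q}-\eta^2)(\eta^{q+1}-\eta^2)(\eta^{q+1}-\eta^{2q})=-\eta^{q+5}(\eta^{q-1}-1)^3(\eta^{q-1}+1),
\]
which vanishes exactly when $\eta^2\in\mathbb F_q$. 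The paper's proof instead claims this determinant equals $\eta^{q+2}P(\eta)$, where $P(\eta)=\eta^{2q}-\eta^{2q-1}-\eta^{q+1}+\eta^{q-1}+\eta-1$. That identity is false.

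The gap is exactly the step you postponed as bookkeeping, and for odd $q$ it cannot be closed. One has $P(\eta)=(\eta-1)(\eta^q-1)(\eta^{q-1}-1)$, so $P(\eta)\neq 0$ only says $\eta\notin\mathbb F_q$. Now take $q$ odd and $\eta^q=-\eta$, so that $\eta^2$ is a nonsquare in $\mathbb F_q$:
\begin{itemize}
\item $P(\eta)=2(\eta^2-1)\neq 0$, so the stated hypothesis holds.
\item The first row of $A$ is $(1,\eta^2,\eta^4)\in\mathbb F_q^3$. Hence $\Sigma_1$, whose right-hand side is $(\xi,0)^\top$ with $\xi\notin\mathbb F_q$, has no solution in $\mathbb F_q^3$.
\end{itemize}
Equivalently, $\eta^2\operatorname{Tr}_{q^2/q}(\eta^2x)=\eta^4\operatorname{Tr}_{q^2/q}(x)$. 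So the three maps span a space of dimension at most $2$ containing the rank-one map $\operatorname{Tr}_{q^2/q}(x)$. That space cannot contain the $2$-dimensional space $\langle x\rangle_{\mathbb F_{q^2}}$, which has no rank-one elements.

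Concretely, in the paper's own $\mathbb F_9$ with $\alpha^2=\alpha+1$, the element $\eta=\alpha^2$ gives $P(\eta)=2$ while $\eta^2=2\in\mathbb F_3$. So the theorem as stated (for every $\eta$ with $P(\eta)\neq 0$) holds only for even $q$. There $\eta^2\in\mathbb F_q$ if and only if $\eta\in\mathbb F_q$, so the two conditions agree.

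What your argument does prove is the corrected statement with hypothesis $\eta^2\notin\mathbb F_q$ (equivalently $\eta^{2(q-1)}\neq 1$). By the above, this hypothesis is also necessary. Your existence argument via a generator of $\mathbb F_{q^2}^*$ is valid for this version. Rather than trying to match $P$, state and prove this corrected hypothesis.
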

\begin{proof}
By Theorem \ref{thm:burg}, we have $ \mu_q^\sym(2) \geq  \mu_q(2)\geq 3$.
Thus, it is enough to construct a symmetric tensor-rank decomposition of
\(M_{q^2}\) with three elements.

For \(m=2\), we apply Theorem \ref{thm:charactfinal} with \(R=3\). Taking
\(\alpha_1=1\), we seek \(\alpha_2,\alpha_3\in\mathbb F_{q^2}^{*}\) such that, for
\(i\in\{0,1\}\), the enlarged system
\[ \Sigma_i^* \colon \begin{pmatrix}
1 & \alpha_2^2 & \alpha_3^2\\
1 & \alpha_2^{2	q} & \alpha_3^{2q}\\
1 & \alpha_2^{q+1} &  \alpha_3^{q+1}
\end{pmatrix}
\begin{pmatrix} x_1 \\ x_2 \\ x_3 \end{pmatrix} = \begin{pmatrix} \xi^i \\ \xi^{iq} \\ 0 \end{pmatrix},
 \]
admits a solution in
\(\mathbb F_q^3\), where $\xi$ is any generator of $\F_{q^2}$ over $\fq$.
In this case, the coefficient matrix of \(\Sigma_i^*\) is
\[
\begin{pmatrix}
1 & \alpha_2^2 & \alpha_3^2\\
1 & \alpha_2^{2q} & \alpha_3^{2q}\\
1 & \alpha_2^{q+1} & \alpha_3^{q+1}
\end{pmatrix}.
\]
Hence it is enough to choose \(\alpha_2,\alpha_3\) so that this matrix is nonsingular, that is, if its determinant in nonzero. We now choose \(\alpha_2=\eta\) and \(\alpha_3=\eta^2\). The determinant of the
coefficient matrix is then
\[
\det
\begin{pmatrix}
1 & \eta^2 & \eta^4\\
1 & \eta^{2q} & \eta^{4q}\\
1 & \eta^{q+1} & \eta^{2(q+1)}
\end{pmatrix}
=
\eta^{q+2}
\left(
\eta^{2q}-\eta^{2q-1}-\eta^{q+1}+\eta^{q-1}+\eta-1
\right).
\]
Thus, if
\[
\eta^{2q}-\eta^{2q-1}-\eta^{q+1}+\eta^{q-1}+\eta-1\neq 0,
\]
then the coefficient matrix is nonsingular. It remains to show that such an element \(\eta\) exists. If \(q>2\), consider
\[
F(T)=T^{q+2}
\left(
T^{2q}-T^{2q-1}-T^{q+1}+T^{q-1}+T-1
\right)\in \mathbb F_q[T].
\]
A nonzero root in $\F_{q^2}$ of $F(T)$ needs to be a root of $T^{2q}-T^{2q-1}-T^{q+1}+T^{q-1}+T-1$, therefore for $q>2$  the polynomial \(F(T)/T^{q+2}\) has degree \(2q<q^2-q\). Since \(\mathbb F_{q^2}\setminus\mathbb F_q\) has~\(q^2-q\) elements, \(F\) cannot
vanish on all of \(\mathbb F_{q^2}\setminus\mathbb F_q\). If \(q=2\), the determinant reduces to
\[
\eta^4(\eta-1),
\]
using \(\eta^4=\eta\) on \(\mathbb F_4\). Thus any
\(\eta\in\mathbb F_4\setminus\mathbb F_2\) works. For each \(i\in\{0,1\}\), the unique solution of \(\Sigma_i^*\) is given by
Cramer's rule. Namely,
\[\left( \frac{\det\begin{pmatrix}
\xi^i & \alpha_2^2 & \alpha_3^2\\
\xi^{iq} & \alpha_2^{2	q} & \alpha_3^{2q}\\
0 & \alpha_2^{q+1} &  \alpha_3^{q+1}
\end{pmatrix}}{\det\begin{pmatrix}
1 & \alpha_2^2 & \alpha_3^2\\
1 & \alpha_2^{2	q} & \alpha_3^{2q}\\
1 & \alpha_2^{q+1} &  \alpha_3^{q+1}
\end{pmatrix}}, \frac{\det\begin{pmatrix}
\alpha_1^2 & \xi^i & \alpha_3^2\\
\alpha_1^{2	q} & \xi^{iq} & \alpha_3^{2q}\\
\alpha_1^{q+1} & 0 &   \alpha_3^{q+1}
\end{pmatrix}}{\det\begin{pmatrix}
1 & \alpha_2^2 & \alpha_3^2\\
1 & \alpha_2^{2	q} & \alpha_3^{2q}\\
1 & \alpha_2^{q+1} &  \alpha_3^{q+1}
\end{pmatrix}},
\frac{\det\begin{pmatrix}
\alpha_1^2 & \alpha_2^2 & \xi^i \\
\alpha_1^{2	q}  & \alpha_2^{2q}& \xi^{iq}\\
\alpha_1^{q+1}  &   \alpha_2^{q+1} & 0
\end{pmatrix}}{\det\begin{pmatrix}
1 & \alpha_2^2 & \alpha_3^2\\
1 & \alpha_2^{2	q} & \alpha_3^{2q}\\
1 & \alpha_2^{q+1} &  \alpha_3^{q+1}
\end{pmatrix}}
\right).\]
Moreover, \(X_i\in\mathbb F_q^3\). Indeed, the system \(\Sigma_i^*\) is stable under
the \(q\)-Frobenius map, and the solution is unique because \(D\neq 0\). Hence the
Frobenius conjugate of \(X_i\) is again a solution, and therefore it must coincide
with \(X_i\). Thus \(X_i\) is fixed by Frobenius and lies in \(\mathbb F_q^3\). By Theorem~\ref{thm:charactfinal}, the set
\[
\left\{
\operatorname{Tr}_{q^2/q}(x),\,
\eta\operatorname{Tr}_{q^2/q}(\eta x),\,
\eta^2\operatorname{Tr}_{q^2/q}(\eta^2x)
\right\}
\]
gives a symmetric tensor-rank decomposition of \(M_{q^2}\). This and the lower
bound~\(\mu_q^{\sym}(2)\geq 3\), prove the claim.
\end{proof}

\begin{remark}
When \(q=2\), the condition in Theorem~\ref{thm:m=2} is satisfied by any
\(\eta\in\mathbb F_4\setminus\mathbb F_2\).
\end{remark}

We now illustrate how the trace-function decomposition in Theorem~\ref{thm:m=2}
can be converted into a matrix decomposition.

\begin{example}
Consider \(q=3\), and let \(\alpha\in\mathbb F_9\) be a root of
\(x^2+2x+2\). The ordered basis~\(\mathcal{B}=(1,\alpha)\) of \(\mathbb F_9\) over \(\mathbb F_3\) has
trace-dual basis \(\mathcal{B}^*=(\alpha,\alpha^2)\).
In this case, the first-slice space \(\operatorname{ss}_1(M_9)\) is spanned by
\[ 
X_1=\begin{pmatrix}
    0&1\\1&1
\end{pmatrix}\qquad\textup{ and }\qquad
X_2=\begin{pmatrix}
    1&1\\1&2
\end{pmatrix}.\]
Take \(\eta=\alpha^5\). Then
\[
\eta^{2q}-\eta^{2q-1}-\eta^{q+1}+\eta^{q-1}+\eta-1=\alpha\neq 0,
\]
so Theorem~\ref{thm:m=2} applies. The three rank-one symmetric matrices associated with the trace functions $\operatorname{Tr}_{9/3}(x)$, $
\eta\operatorname{Tr}_{9/3}(\eta x)$,
$\eta^2\operatorname{Tr}_{9/3}(\eta^2 x)$ are obtained by evaluating these linearized polynomials on the trace-dual basis
\(\mathcal{B}^*\) and expanding the results with respect to \(\mathcal{B}\). They~are
\[ A_1=\begin{pmatrix}
    1&0\\0&0
\end{pmatrix},\qquad A_2=\begin{pmatrix}
    0&0\\0&1
\end{pmatrix},\qquad A_3=\begin{pmatrix}
    1&1\\1&1
\end{pmatrix}. \]
Indeed, $X_1=A_3-A_1$ and $X_2=A_3+A_2$. Hence $\operatorname{ss}_1(M_9)= \langle X_1,X_2\rangle_{\F_3}
\subseteq
\langle A_1,A_2,A_3\rangle_{\mathbb F_3}$.
\end{example}

\subsection{Case $m=3$}

When \(m=3\) and \(q\ge 4\), Theorem~\ref{thm:burg} and Remark~\ref{rem:burg} imply that $\mu_q^{\mathrm{sym}}(3)=\mu_q(3)=5$. By Table~\ref{table:values}, for \(q=2\) and \(q=3\) we have
\[
\mu_2^{\mathrm{sym}}(3)=\mu_2(3)=6
\qquad\text{ and }\qquad
\mu_3^{\mathrm{sym}}(3)=\mu_3(3)=6.
\]
In what follows, we provide a method for finding a decomposition of the symmetric tensor; this decomposition is optimal when \(q\in\{2,3\}\).

\begin{theorem}\label{thm:m=3}
    If there exist $\alpha_1,\alpha_2,\ldots,\alpha_6 \in \F_{q^3}^*$ such that
    \[\det
    \begin{pmatrix}
        \alpha_1^2 & \alpha_2^2 & \alpha_3^2 & \alpha_4^2 & \alpha_5^2 & \alpha_6^2\\
        \alpha_1^{2q} & \alpha_2^{2q} & \alpha_3^{2q} & \alpha_4^{2q} & \alpha_5^{2q} & \alpha_6^{2q}\\
        \alpha_1^{2q^2} & \alpha_2^{2q^2} & \alpha_3^{2q^2} & \alpha_4^{2q^2} & \alpha_5^{2q^2} & \alpha_6^{2q^2}\\
        \alpha_1^{q+1} & \alpha_2^{q+1} & \alpha_3^{q+1} & \alpha_4^{q+1} & \alpha_5^{q+1} & \alpha_6^{q+1}\\
        \alpha_1^{q^2+q} & \alpha_2^{q^2+q} & \alpha_3^{q^2+q} & \alpha_4^{q^2+q} & \alpha_5^{q^2+q} & \alpha_6^{q^2+q}\\
        \alpha_1^{q^2+1} & \alpha_2^{q^2+1} & \alpha_3^{q^2+1} & \alpha_4^{q^2+1} & \alpha_5^{q^2+1} & \alpha_6^{q^2+1}
    \end{pmatrix} \ne 0,
    \]
    then \(\mu_q^{\mathrm{sym}}(3)\le 6\). Moreover, the elements \(\alpha_1,\alpha_2,\ldots,\alpha_6\) define a symmetric tensor-rank decomposition of
\(M_{q^3}\).
\end{theorem}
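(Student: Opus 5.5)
The plan is to apply Theorem~\ref{thm:charactfinal} with $m=3$ and $R=6$, exactly as in the proof of Theorem~\ref{thm:m=2}. First I will identify the enlarged system $\Sigma_i^*$. For $m=3$ the matrix $A$ has rows $(\alpha_j^2)_j$, $(\alpha_j^{q+1})_j$, $(\alpha_j^{q^2+1})_j$.

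Applying the $q$- and $q^2$-Frobenius permutes these rows. The first row becomes $(\alpha_j^{2q})_j$ and then $(\alpha_j^{2q^2})_j$. The row $(\alpha_j^{q+1})_j$ becomes $(\alpha_j^{q^2+q})_j$ and then $(\alpha_j^{q^2+1})_j$, since $\alpha^{q^3}=\alpha$. The row $(\alpha_j^{q^2+1})_j$ cycles through the same three exponents. Hence the nine rows of the stacked matrix $\bigl(A;A^{[1]};A^{[2]}\bigr)$ consist of exactly six distinct rows, each possibly repeated. These six rows are precisely the rows of the $6\times 6$ matrix $D$ in the statement.

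The right-hand sides transform compatibly. The entries attached to the rows $\alpha^{2q^k}$ are $\xi^{iq^k}$, and those attached to the rows $\alpha^{q^a+q^b}$ with $a\neq b$ are $0$. So a repeated row always carries the same right-hand side, and the repetitions impose no new condition. It follows that $\Sigma_i^*$ is equivalent to the square system
\[
DX=(\xi^i,\xi^{iq},\xi^{iq^2},0,0,0)^\top .
\]

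Next I will use $\det D\neq 0$ to obtain a unique solution $X_i\in\mathbb F_{q^3}^6$, which can be written explicitly via Cramer's rule. The remaining point is to show that $X_i\in\mathbb F_q^6$, using the Frobenius-stability argument from the case $m=2$. Apply the $q$-Frobenius entrywise to $DX_i=b$. The rows of $D$ are permuted cyclically within the two orbits $\{2,2q,2q^2\}$ and $\{q+1,q^2+q,q^2+1\}$. The right-hand side $b$ is permuted in the same way, since $(\xi^{iq^2})^q=\xi^i$.

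So $X_i^{[1]}$ solves $PDX=Pb$ for a permutation matrix $P$, which is the same system. By uniqueness, $X_i^{[1]}=X_i$, and therefore $X_i\in\mathbb F_q^6$. In particular each $\Sigma_i$ admits a solution in $\mathbb F_q^6$ for $i\in\{0,1,2\}$. Theorem~\ref{thm:charactfinal} then gives $\mu_q^{\mathrm{sym}}(3)\le 6$.

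For the ``moreover'' part I will unwind Proposition~\ref{prop:linsymtensorrank}. The solutions $X_i$ express each $\xi^i x$ as an $\mathbb F_q$-combination of the polynomials $\alpha_j\operatorname{Tr}_{q^3/q}(\alpha_j x)$. These six polynomials are rank-one and symmetric by Proposition~\ref{prop:rank1sym}, so by Proposition~\ref{prop:<=Rwithlinpol} they span a space containing $\langle x\rangle_{\mathbb F_{q^3}}$. They therefore give a symmetric tensor decomposition of length $6$. This decomposition is optimal when $\mu_q^{\mathrm{sym}}(3)=6$, i.e.\ for $q\in\{2,3\}$.

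The main obstacle is the bookkeeping in the Frobenius step. I need to check carefully that the exponents $q^2+q$ and $q^2+1$ arise as Frobenius images of $q+1$ modulo $q^3-1$. I also need to confirm that the permutation acting on the rows of $D$ is the same permutation acting on the right-hand side, because both the reduction from nine rows to six and the fixed-point argument depend on this.
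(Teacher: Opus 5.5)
Your proposal is correct and follows the paper's route: you identify the $6\times 6$ matrix as the coefficient matrix of the enlarged system $\Sigma_i^*$, use $\det\neq 0$ to get a unique solution, use Frobenius stability and uniqueness to place that solution in $\mathbb F_q^6$, and conclude with Theorem~\ref{thm:charactfinal}. Your explicit checks fill in details the paper only asserts: that the nine stacked rows reduce to six with consistent right-hand sides, and that the $q$-Frobenius permutes the rows and the right-hand side by the same permutation.
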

\begin{proof}
The matrix appearing in the statement is the coefficient matrix of the
enlarged system \(\Sigma_i^*\) described in Theorem~\ref{thm:charactfinal}
for \(m=3\). If its determinant is nonzero, then, for every~\(i\in\{0,1,2\}\), the
corresponding system \(\Sigma_i^*\) has a unique solution. Since
\(\Sigma_i^*\) is stable under the \(q\)-Frobenius map, this solution is
fixed by Frobenius and therefore lies in \(\mathbb F_q^6\). The statement
then follows from Theorem~\ref{thm:charactfinal}.
\end{proof}

As a special choice of the elements in Theorem~\ref{thm:m=3}, one may
consider consecutive powers of a single element, namely $\alpha_i=\xi^{i-1}$,  $i=\{1,\ldots,6\}$, for some \(\xi\in \mathbb F_{q^3}\). In the following, we let $T$ be an indeterminate.

\begin{corollary}\label{cor:m=3}
A symmetric tensor-rank
decomposition of \(M_{q^3}\) is
\[ \{\xi^i\mathrm{Tr}_{q^3/q}(\xi^i x) : i \in \{0,\ldots,5\}\}, \]
where $\xi$ is not a root of 
\begin{equation}\label{eq:f(T)}
f(T)=
\det
\begin{pmatrix}
1 & T^2 & (T^2)^2 & (T^3)^2 & (T^4)^2 & (T^5)^2\\
1 & T^{2q} & (T^2)^{2q} & (T^3)^{2q} & (T^4)^{2q} & (T^5)^{2q}\\
1 & T^{2q^2} & (T^2)^{2q^2} & (T^3)^{2q^2} & (T^4)^{2q^2} & (T^5)^{2q^2}\\
1 & T^{q+1} & (T^2)^{q+1} & (T^3)^{q+1} & (T^4)^{q+1} & (T^5)^{q+1}\\
1 & T^{q^2+q} & (T^2)^{q^2+q} & (T^3)^{q^2+q} & (T^4)^{q^2+q} & (T^5)^{q^2+q}\\
1 & T^{q^2+1} & (T^2)^{q^2+1} & (T^3)^{q^2+1} & (T^4)^{q^2+1} & (T^5)^{q^2+1}
\end{pmatrix}
\in \mathbb F_q[T].
\end{equation}
This decomposition is optimal for $q \in \{2,3\}$.
\end{corollary}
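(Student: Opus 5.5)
The plan is to specialize Theorem~\ref{thm:m=3} to the choice $\alpha_i=\xi^{i-1}$ for $i\in\{1,\ldots,6\}$.

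First, substituting $\alpha_i=\xi^{i-1}$ into the $6\times 6$ matrix of Theorem~\ref{thm:m=3} gives exactly the matrix in~\eqref{eq:f(T)} evaluated at $T=\xi$. The rows of that matrix are indexed by the exponents $2,2q,2q^2,q+1,q^2+q,q^2+1$. These are the first two rows of $A$ together with their Frobenius conjugates. Indeed, $\alpha^{(q^2+1)q}=\alpha^{q^3+q}=\alpha^{1+q}$ in $\mathbb F_{q^3}$, so the conjugates of the row with exponent $q^2+1$ are again among these six rows, up to reordering. So the hypothesis of Theorem~\ref{thm:m=3} reads $f(\xi)\neq 0$. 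This assumes each $\alpha_i$ is nonzero, which holds because $\xi\neq 0$. Note that $\xi=0$ is automatically a root of $f$: every column except the first vanishes at $T=0$. So the condition on $\xi$ already excludes $\xi=0$.

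Next, we check that $f$ has coefficients in $\mathbb F_q$, as the statement asserts. Applying the $q$-Frobenius to the coefficients permutes the rows, so the determinant changes at most by a sign. This point is only cosmetic for the corollary, since the statement only needs $f(\xi)\neq0$ for the chosen $\xi$.

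Theorem~\ref{thm:m=3} then gives $\mu_q^{\mathrm{sym}}(3)\le 6$. It also shows that the six rank-one symmetric polynomials $\alpha_i\operatorname{Tr}_{q^3/q}(\alpha_i x)=\xi^{i-1}\operatorname{Tr}_{q^3/q}(\xi^{i-1}x)$ span a space containing $\langle x\rangle_{\mathbb F_{q^3}}$. This is the claimed decomposition, via Proposition~\ref{prop:<=Rwithlinpol}.

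For optimality when $q\in\{2,3\}$, we invoke Table~\ref{table:values}, which gives $\mu_2^{\mathrm{sym}}(3)=\mu_3^{\mathrm{sym}}(3)=6$. Then a decomposition of length $6$ is optimal.

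The main obstacle is that the corollary is vacuous unless some $\xi\in\mathbb F_{q^3}$ with $f(\xi)\neq0$ exists. A degree-counting argument as in Theorem~\ref{thm:m=2} is unlikely to work. The degree of $f$ is of order $10q^2$, which exceeds $q^3$ for small $q$, and we would also need $f\not\equiv0$ modulo $T^{q^3}-T$. For $q\in\{2,3\}$, I would settle existence by direct computation. One reduces the exponents modulo $q^3-1$, tests the elements of $\mathbb F_8$ and $\mathbb F_{27}$, and exhibits an explicit $\xi$ with nonzero determinant. Such a $\xi$ must be a generator: if $\xi\in\mathbb F_q$, the rows coincide. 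For general $q$, the statement only asserts that the set is a decomposition whenever $\xi$ is not a root. So I would present existence as a computational verification for small $q$, rather than attempt a general proof.
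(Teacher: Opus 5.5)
Your reduction matches the paper's. Substituting $\alpha_i=\xi^{i-1}$ into Theorem~\ref{thm:m=3} turns its determinant into $f(\xi)$. The six rows form the two Frobenius orbits $\{2,2q,2q^2\}$ and $\{q+1,q^2+q,q^2+1\}$, and optimality for $q\in\{2,3\}$ comes from Table~\ref{table:values}. (The coefficient remark is simpler than you make it. Every entry is a monic monomial, so $f\in\mathbb F_p[T]$ automatically; your Frobenius argument actually shows $f(\xi)^q=\pm f(\xi)$, which is a statement about values, not coefficients.)

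The gap is the existence of $\xi$. You rightly see that without it the corollary is vacuous, and the paper proves existence for every $q$. You settle it only for $q\in\{2,3\}$, and you dismiss the degree count for all other $q$ on an incorrect estimate.

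The degree count does work. In the row with exponent $e$ and column $j\in\{0,\ldots,5\}$ the entry is $T^{je}$, so each permutation contributes $\pm T^{\sum je}$. The row exponents $2q^2>q^2+q>q^2+1>2q>q+1>2$ are distinct. By the strict rearrangement inequality, exactly one permutation attains the maximum
$5\cdot 2q^2+4(q^2+q)+3(q^2+1)+2\cdot 2q+(q+1)=17q^2+9q+4$.
Hence $f$ has leading term $-T^{17q^2+9q+4}$, with coefficient $-1\neq 0$ in every characteristic. Your $10q^2$ is only the top row's share of this degree.

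For prime powers $q\ge 19$, this degree is below $q^3$. So $f$ is a nonzero polynomial of degree $<q^3$, which removes your worry about $f\equiv 0\pmod{T^{q^3}-T}$, and it cannot vanish on all of $\mathbb F_{q^3}$. Since every element of $\mathbb F_q$ is a root, some $\xi\in\mathbb F_{q^3}\setminus\mathbb F_q$ is not a root.

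Only the finitely many $q\le 17$ then need a computation. The paper does this by reducing $f$ modulo $T^{q^3}-T$ and recording a nonzero leading term in Table~\ref{table:m=3}. Without this step, your argument does not support the later claim that the six-element spanning set of Corollary~\ref{cor:m=3} exists for every prime power $q$.
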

\begin{proof}
Observe that every element of \(\mathbb F_q\) is a root of \(f(T)\). Indeed,
if \(a\in\mathbb F_q\), then \(a^q=a\), and the first three rows of the
matrix defining \(f(a)\) coincide. Thus our aim is to find a non-root of~\(f(T)\) in \(\mathbb F_{q^3}\setminus\mathbb F_q\) and therefore it is sufficient to check that $\deg f(T)<q^3$ .

A direct computation shows that, if \(q>17\), then
\[
\deg f(T)=17q^2+9q+4
\]
and the leading monomial is \(-T^{17q^2+9q+4}\). Since
\[
17q^2+9q+4<q^3
\]
for \(q>17\), the polynomial \(f(T)\) cannot vanish on all elements of
\(\mathbb F_{q^3}\). Since all elements of~\(\mathbb F_q\) are roots of
\(f(T)\), there exists $\xi\in \mathbb F_{q^3}\setminus\mathbb F_q$ such that \(f(\xi)\ne 0\).

It remains to consider the prime powers \(q\le 17\). In these cases, we reduce
\(f(T)\) modulo~\(T^{q^3}-T\), since two polynomials define the same function
on \(\mathbb F_{q^3}\) if and only if they are congruent modulo \(T^{q^3}-T\).
The leading terms of these reductions are listed in Table~\ref{table:m=3}.
In each case, the reduction of \(f(T)\) modulo \(T^{q^3}-T\) is nonzero.
Therefore it has degree strictly smaller than \(q^3\) and cannot vanish on all
of \(\mathbb F_{q^3}\). Since every element of \(\mathbb F_q\) is a root of
\(f(T)\), there exists
\[
\xi\in \mathbb F_{q^3}\setminus\mathbb F_q
\]
such that \(f(\xi)\ne 0\). Taking $\alpha_i=\xi^{i-1}$, $i=\{1,\ldots,6\}$, the determinant in Theorem~\ref{thm:m=3} is equal to \(f(\xi)\), and hence is
nonzero. Therefore Theorem~\ref{thm:m=3} gives a symmetric tensor-rank
decomposition of \(M_{q^3}\) with six elements. The equalities for
\(q=2\) and \(q=3\) follow from the lower bounds recorded in
Table~\ref{table:values}.
\end{proof}

\begin{table}[h]
\centering
\renewcommand{\arraystretch}{1.2}
\resizebox{0.89\textwidth}{!}{
\begin{tabular}{|c|c|c|c|c|c|c|c|c|c|c|c|}
\hline
$q$ 
& 2 & 3 & 4 & 5 & 7 & 8 & 9 & 11 & 13 & 16 & 17 \\ \hline
$q^3$ 
& 8 & 27 & 64 & 125 & 343 & 512 & 729 & 1331 & 2197 & 4096 & 4913 \\ \hline
LM
& $T^{6}$ 
& $2 T^{24}$ 
& $ T^{63}$ 
& $ T^{122}$ 
& $2 T^{336}$ 
& $ T^{485}$ 
& $ T^{718}$ 
& $10 T^{1280}$ 
& $12 T^{2178}$ 
& $ T^{4035}$ 
& $ T^{4814}$\\\hline
\end{tabular}}
\caption{\label{table:m=3}Leading term of $f(T)\bmod (T^{q^3}-T)$ for $m=3$ and $q\leq 17$.}
\end{table}

\subsection{Case $m=4$}

In this subsection, we specialize Theorem~\ref{thm:charactfinal} to the case \(m=4\).
Let \(\xi\in \mathbb F_{q^4}\) be an element of degree \(4\) over \(\mathbb F_q\), so that
\(\{1,\xi,\xi^2,\xi^3\}\) form an \(\mathbb F_q\)-basis of \(\mathbb F_{q^4}\).

\begin{theorem}\label{thm:m=4}
Let \(R\) be a positive integer, and let $\alpha_1,\ldots,\alpha_R\in \mathbb F_{q^4}^*$. Consider the matrix $A\in\F_{q^4}^{10\times R}$ and the vector $b_i\in \F_{q^4}^{10}$, for every \(i\in\{0,1,2,3\}\), defined as 
\[
A=
\begin{pmatrix}
\alpha_1^2 & \alpha_2^2 & \cdots & \alpha_R^2 \\
\alpha_1^{2q} & \alpha_2^{2q} & \cdots & \alpha_R^{2q} \\
\alpha_1^{2q^2} & \alpha_2^{2q^2} & \cdots & \alpha_R^{2q^2} \\
\alpha_1^{2q^3} & \alpha_2^{2q^3} & \cdots & \alpha_R^{2q^3} \\
\alpha_1^{q+1} & \alpha_2^{q+1} & \cdots & \alpha_R^{q+1} \\
\alpha_1^{q^2+q} & \alpha_2^{q^2+q} & \cdots & \alpha_R^{q^2+q} \\
\alpha_1^{q^3+q^2} & \alpha_2^{q^3+q^2} & \cdots & \alpha_R^{q^3+q^2} \\
\alpha_1^{q^3+1} & \alpha_2^{q^3+1} & \cdots & \alpha_R^{q^3+1} \\
\alpha_1^{q^2+1} & \alpha_2^{q^2+1} & \cdots & \alpha_R^{q^2+1} \\
\alpha_1^{q^3+q} & \alpha_2^{q^3+q} & \cdots & \alpha_R^{q^3+q}
\end{pmatrix}\qquad\text{ and }\qquad
b_i=
\begin{pmatrix}
\xi^i\\
\xi^{iq}\\
\xi^{iq^2}\\
\xi^{iq^3}\\
0\\
0\\
0\\
0\\
0\\
0
\end{pmatrix}.
\]
If, for every \(i\in\{0,1,2,3\}\), the linear system
\[
AX=b_i
\]
admits a solution \(X\in\mathbb F_q^R\), then
\[
\mu_q^{\mathrm{sym}}(4)\le R.
\]
Moreover, the elements \(\alpha_1,\ldots,\alpha_R\) yield the symmetric tensor decomposition
\[
\left\{
\alpha_1\operatorname{Tr}_{q^4/q}(\alpha_1x),
\ldots,
\alpha_R\operatorname{Tr}_{q^4/q}(\alpha_Rx)
\right\}.
\]
\end{theorem}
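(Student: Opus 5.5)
The natural route is to show that the system $AX=b_i$ of the statement is a row-rearranged sub-system of the enlarged system $\Sigma_i^*$ of Theorem~\ref{thm:charactfinal} for $m=4$, and that it still contains the original system $\Sigma_i$. The conclusion then follows from the equivalence part of Theorem~\ref{thm:charactfinal}.

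First I identify the rows of $A$. For $m=4$ the original matrix of $\Sigma_i$ has rows $\alpha_k^{2},\alpha_k^{q+1},\alpha_k^{q^2+1},\alpha_k^{q^3+1}$, with right-hand side $(\xi^i,0,0,0)^\top$. In the displayed $A$ these appear as rows 1, 5, 9 and 8 respectively. The corresponding entries of $b_i$ are $\xi^i,0,0,0$. So the equations of $\Sigma_i$ are among the equations of $AX=b_i$.

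Next I observe that the remaining rows are Frobenius conjugates of these. The $q^j$-powers of $\alpha^2$ give rows 1--4. The conjugates of $\alpha^{q+1}$ give $\alpha^{q^2+q},\alpha^{q^3+q^2},\alpha^{q^3+1}$ (reducing exponents using $\alpha^{q^4}=\alpha$), which are rows 5--8. The conjugates of $\alpha^{q^2+1}$ give only $\alpha^{q^3+q}$ before repeating, which are rows 9--10. The right-hand sides match because the conjugates of $0$ are $0$ and the conjugates of $\xi^i$ are $\xi^{iq^j}$. So $AX=b_i$ is exactly $\Sigma_i^*$ with duplicate rows removed. Strictly, this identification is not even needed: it is enough that rows 1, 5, 8 and 9 give $\Sigma_i$.

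Then I conclude. If $X\in\mathbb F_q^R$ solves $AX=b_i$ for every $i\in\{0,1,2,3\}$, then $X$ solves $\Sigma_i$ for each $i$. By the equivalence in Theorem~\ref{thm:charactfinal}, or directly by Proposition~\ref{prop:linsymtensorrank}, we get $\mu_q^{\mathrm{sym}}(4)\le R$. Moreover, the vectors $(\xi^i,0,0,0)$ lie in the $\mathbb F_q$-span of the coordinate vectors $c_{\mathcal B}(\alpha_k\operatorname{Tr}_{q^4/q}(\alpha_k x))$. Since $c_{\mathcal B}$ is injective and $\mathbb F_q$-linear, each $\xi^i x$ is the combination $\sum_k x_k\,\alpha_k\operatorname{Tr}_{q^4/q}(\alpha_k x)$. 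As $\{\xi^i\}$ is an $\mathbb F_q$-basis, $\langle x\rangle_{\mathbb F_{q^4}}$ is contained in the span of these rank-one symmetric polynomials. By Propositions~\ref{prop:rank1sym} and~\ref{prop:<=Rwithlinpol}, this is the claimed symmetric decomposition.

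There is no real obstacle. The only care needed is the exponent bookkeeping modulo $q^4-1$, which ensures that rows 1, 5, 8 and 9 correspond to the coordinates $x,x^q,x^{q^2},x^{q^3}$ of $\alpha\operatorname{Tr}(\alpha x)$. In particular, row 8, with exponent $q^3+1$, is the $x^{q^3}$-coefficient.
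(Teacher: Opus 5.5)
Your proposal is correct and essentially matches the paper's proof, which likewise identifies $AX=b_i$ with the enlarged system $\Sigma_i^*$ of Theorem~\ref{thm:charactfinal} after discarding repeated Frobenius-conjugate rows (the orbits of the exponents $2$, $q+1$, $q^2+1$) and then invokes that theorem. Your observation that rows 1, 5, 8 and 9 alone already reproduce $\Sigma_i$ is a valid shortcut that makes the orbit bookkeeping unnecessary for the conclusion.
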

\begin{proof}
For \(m=4\), Theorem~\ref{thm:charactfinal} starts from the requirement that,
for every \(i\in\{0,1,2,3\}\), the system
\[
\begin{pmatrix}
\alpha_1^2 & \alpha_2^2 & \cdots & \alpha_R^2 \\
\alpha_1^{q+1} & \alpha_2^{q+1} & \cdots & \alpha_R^{q+1} \\
\alpha_1^{q^2+1} & \alpha_2^{q^2+1} & \cdots & \alpha_R^{q^2+1} \\
\alpha_1^{q^3+1} & \alpha_2^{q^3+1} & \cdots & \alpha_R^{q^3+1}
\end{pmatrix}
X
=
\begin{pmatrix}
\xi^i\\
0\\
0\\
0
\end{pmatrix}
\]
admits a solution \(X\in\mathbb F_q^R\). Since the entries of \(X\) are
required to lie in \(\mathbb F_q\), applying the \(q\)-Frobenius automorphism
to the equations gives the Frobenius-conjugate equations. Thus the enlarged
system in Theorem~\ref{thm:charactfinal} is obtained by adjoining the
Frobenius conjugates of the rows above.

The Frobenius orbit of the exponent \(2\) is
\[
2,\ 2q,\ 2q^2,\ 2q^3.
\]
The Frobenius orbit of the exponent \(q+1\) is
\[
q+1,\ q^2+q,\ q^3+q^2,\ q^3+1.
\]
Finally, the Frobenius orbit of the exponent \(q^2+1\) is
\[
q^2+1,\ q^3+q.
\]
The exponent \(q^3+1\) already belongs to the orbit of \(q+1\). Therefore,
the distinct Frobenius-conjugate rows are precisely the rows of the matrix
\(A\) appearing in the statement, and the corresponding right-hand side is
the vector \(b_i\).

Hence, if for every \(i\in\{0,1,2,3\}\) the system $AX=b_i$ admits a solution \(X\in\mathbb F_q^R\), then the hypothesis of
Theorem~\ref{thm:charactfinal} is satisfied. Consequently,
\[
\mu_q^{\mathrm{sym}}(4)\le R.
\]
The associated rank-one symmetric linearized polynomials are $\alpha_j\operatorname{Tr}_{q^4/q}(\alpha_jx)$, $j\in\{1,\ldots,R\}$, and therefore the elements \(\alpha_1,\ldots,\alpha_R\) yield the symmetric
tensor decomposition
\[
\left\{
\alpha_1\operatorname{Tr}_{q^4/q}(\alpha_1x),
\ldots,
\alpha_R\operatorname{Tr}_{q^4/q}(\alpha_Rx)
\right\}.\qedhere
\]
\end{proof}  

\begin{remark}
For a fixed \(i\), the rank condition $\operatorname{rk}_{\mathbb F_{q^4}}(A)
=
\operatorname{rk}_{\mathbb F_{q^4}}(A \mid b_i)$ only guarantees that the system \(AX=b_i\) has a solution over
\(\mathbb F_{q^4}\). In Theorem~\ref{thm:m=4}, however, we need a
solution \(X\in\mathbb F_q^R\). Thus solvability over the extension field is
not sufficient, and the theorem is stated directly in terms of
\(\mathbb F_q\)-solutions.
\end{remark}

The following result provides explicit symmetric tensor decompositions for \(q\in\{2,3,4,5\}\) and \(m=4\). For \(q=2,4,5\), the resulting decompositions
attain the values recorded in Table~\ref{table:values}.

\begin{corollary}\label{cor:m=4}
The following hold.
\begin{enumerate}
    \item An optimal symmetric
    tensor decomposition of \(M_{2^4}\) is
    \[
    \left\{
    \xi^{i_1}\operatorname{Tr}_{2^4/2}(\xi^{i_1}x),
    \ldots,
    \xi^{i_9}\operatorname{Tr}_{2^4/2}(\xi^{i_9}x)
    \right\},
    \]
    where \(\xi\) and \(\{i_1,\ldots,i_9\}\) are as in
    Table~\ref{table:m4}.

    \item An optimal symmetric tensor decomposition of \(M_{3^4}\) is
    \[
    \left\{
    \xi^{i_1}\operatorname{Tr}_{3^4/3}(\xi^{i_1}x),
    \ldots,
    \xi^{i_9}\operatorname{Tr}_{3^4/3}(\xi^{i_9}x)
    \right\},
    \]
    where \(\xi\) and \(\{i_1,\ldots,i_9\}\) are as in
    Table~\ref{table:m4}.

    \item  For $q\in\{4,5\}$, an optimal symmetric tensor decomposition of~\(M_{q^4}\) is
    \[
    \left\{
    \xi^{i_1}\operatorname{Tr}_{q^4/q}(\xi^{i_1}x),
    \ldots,
    \xi^{i_8}\operatorname{Tr}_{q^4/q}(\xi^{i_8}x)
    \right\},
    \]
    where \(\xi\) and \(\{i_1,\ldots,i_8\}\) are as in
    Table~\ref{table:m4}.
\end{enumerate}
\end{corollary}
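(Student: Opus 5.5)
The plan is to verify, for each $q\in\{2,3,4,5\}$, the data of Table~\ref{table:m4} with Theorem~\ref{thm:m=4}, and then obtain optimality from the lower bounds in Table~\ref{table:values}. For each $q$, we fix the element $\xi\in\mathbb F_{q^4}$ from the table. We check that it has degree $4$ over $\mathbb F_q$, for instance by checking that its minimal polynomial, as recorded in the table, is irreducible of degree $4$. We then set $\alpha_j=\xi^{i_j}$ for $j\in\{1,\ldots,R\}$, with $R=9$ when $q\in\{2,3\}$ and $R=8$ when $q\in\{4,5\}$. With these choices we form the $10\times R$ matrix $A$ and the vectors $b_0,\ldots,b_3$ of Theorem~\ref{thm:m=4}.

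The core step is to show that each system $AX=b_i$ has a solution $X\in\mathbb F_q^R$. It is cleaner to avoid the enlarged system and work directly with the containment in Proposition~\ref{prop:linsymtensorrank}. We expand each entry $\alpha_j^{1+q^k}$ (for $k=0,\ldots,3$) in the basis $1,\xi,\xi^2,\xi^3$. This turns the four $\mathbb F_{q^4}$-equations of $\Sigma_i$ into $16$ linear equations over $\mathbb F_q$ in the $R$ unknowns. We then solve these by Gaussian elimination over $\mathbb F_q$ and exhibit the solutions $X_0,\ldots,X_3$ explicitly. Their existence is exactly the hypothesis of Theorem~\ref{thm:m=4}, since the Frobenius-conjugate rows are then satisfied automatically.

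A structural shortcut is also available. The four first-slice elements $x,\xi x,\xi^2x,\xi^3x$ span a $4$-dimensional subspace of $\mathcal S_q(4)$, which has $\mathbb F_q$-dimension $10$. So it suffices to check that the $\mathbb F_q$-span of the vectors $(\alpha_j^2,\alpha_j^{1+q},\alpha_j^{1+q^2},\alpha_j^{1+q^3})$ contains the vectors $b_i$. This can be certified by a single rank computation over $\mathbb F_q$ on the $16\times(R+4)$ coordinate matrix, namely by checking that appending the $b_i$ does not increase the rank. Theorem~\ref{thm:m=4} then gives $\mu_q^{\mathrm{sym}}(4)\le 9$ for $q\in\{2,3\}$ and $\mu_q^{\mathrm{sym}}(4)\le 8$ for $q\in\{4,5\}$, together with the stated decompositions.

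Optimality comes from the lower bounds. Table~\ref{table:values} gives $\mu_2^{\mathrm{sym}}(4)=9$, $\mu_3^{\mathrm{sym}}(4)\ge 8$ and $\mu_4^{\mathrm{sym}}(4)=8$. For $q=5$, Theorem~\ref{thm:shok} with $m=4$ gives $\mu_5^{\mathrm{sym}}(4)=8$, since $3.5<4\le \tfrac12(6+\epsilon(5))=\tfrac12(6+4)=5$ (here $\epsilon(5)=4$). Alternatively, Theorem~\ref{thm:burg} gives $\mu_5^{\mathrm{sym}}(4)>7$ because $5<6$.

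The main obstacle is $q=3$. There the table only pins down $\mu_3^{\mathrm{sym}}(4)\in\{8,9\}$, so calling the $9$-term decomposition optimal needs an extra argument that no $8$-term symmetric decomposition exists. My plan is an exhaustive computer search. Up to $\mathbb F_3^*$-scaling, rank-one symmetric polynomials are indexed by $\alpha\in\mathbb F_{81}^*/\{\pm1\}$, which gives $40$ candidates. We would run over all $8$-subsets, which can be reduced considerably using the action of multiplication by $\mathbb F_{81}^*$ on the problem. For each subset we test, by rank over $\mathbb F_3$, whether its span contains the $4$-dimensional first slice. If this is infeasible or unsupported by the paper's tables, the honest fallback is to state optimality for $q=3$ only relative to the best known upper bound.
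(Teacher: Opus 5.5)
Your treatment of everything the paper actually proves follows the paper's route. The paper simply reports a MAGMA check that $AX=b_i$ has a solution in $\F_q^R$ for $\alpha_j=\xi^{i_j}$, and then invokes Theorem~\ref{thm:m=4}. Your expansion over $1,\xi,\xi^2,\xi^3$ followed by a single rank test is a concrete way of doing that check. The lower bounds you use for $q=2,4,5$ are the paper's. Your alternative for $q=5$ via Theorem~\ref{thm:burg} is also valid: if $\mu_5^{\mathrm{sym}}(4)=7$, then $\mu_5(4)=7$ as well, which would force $5\ge 6$.

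Your diagnosis for $q=3$ is correct, and the gap is in the statement, not in your plan. The paper's proof argues optimality only for $q\in\{2,4,5\}$ and says nothing about $q=3$. The remark right after the corollary concedes that only $\mu_3^{\mathrm{sym}}(4)\le 9$ is established: no $8$-term decomposition was found, but no exhaustive search was completed. So, as far as the paper goes, item~2 holds only in your fallback form.

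Your proposed search is set up correctly, and it is small:
\begin{itemize}
\item Up to $\F_3^*$-scaling, the rank-one symmetric elements are the $40$ polynomials $\alpha\operatorname{Tr}_{3^4/3}(\alpha x)$ with $\alpha\in\F_{81}^*/\F_3^*$.
\item The map $f(x)\mapsto\gamma f(\gamma x)$ sends $\alpha\operatorname{Tr}_{3^4/3}(\alpha x)$ to $\gamma\alpha\operatorname{Tr}_{3^4/3}(\gamma\alpha x)$ and maps $\langle x\rangle_{\F_{81}}$ onto itself.
\item This action is transitive on the $40$ points, so you may fix $\alpha_1=1$. That leaves $\binom{39}{7}$ subsets, each tested by a rank computation in the $10$-dimensional space $\mathcal S_3(4)$.
\end{itemize}
Its outcome is genuinely open, however: if some $8$-subset works, item~2 is false as stated.

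One further point affects your plan as literally written. The $q=4$ row of Table~\ref{table:m4} lists nine exponents, identical to the $q=3$ row. So checking the printed data cannot produce the eight-term decomposition claimed in item~3 for $q=4$. The value $\mu_4^{\mathrm{sym}}(4)=8$ is still secured by Table~\ref{table:values} and Theorem~\ref{thm:shok}, but an explicit eight-term decomposition for $q=4$ needs corrected data.
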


\begin{proof}
For each row of Table~\ref{table:m4}, let
\[
\alpha_j=\xi^{i_j},
\]
where the exponents \(i_j\) are those listed in the corresponding row. A MAGMA
computation verifies that, for these choices of \(\alpha_j\), the systems
\[
AX=b_i,\qquad i\in\{0,1,2,3\},
\]
admit solutions in \(\mathbb F_q^R\), where \(R=9\) for \(q\in\{2,3\}\) and
\(R=8\) for \(q\in\{4,5\}\). Therefore, by Theorem~\ref{thm:m=4}, the
listed elements yield symmetric tensor decompositions of \(M_{q^4}\) with the
claimed number of elements.

For \(q=2\), Table~\ref{table:values} gives $\mu^{\mathrm{sym}}_2(4)=9$, so the decomposition in the statement is optimal. Similarly, for \(q=4\),
Table~\ref{table:values} gives $\mu^{\mathrm{sym}}_4(4)=8$, so the corresponding decomposition is optimal. Finally, for \(q=5\),
Theorem~\ref{thm:shok} gives $\mu^{\mathrm{sym}}_5(4)=2\cdot 4=8$, and hence the decomposition in the statement is optimal in this case as well. This concludes the proof.
\end{proof}

\begin{table}[h]
    \centering
    \begin{tabular}{|c|c|c|}\hline
        $q$ &  Minimal Polynomial of $\xi$ & Sequence\\\hline
        2 & $x^4 + x + 1$ & $[ 0, 1, 4, 5, 6, 9, 10, 11, 14 ]$\\\hline
        3 & $x^4+2x^3+2$ & $[ 0, 9, 15, 33, 36, 42, 52, 54, 70 ]$\\\hline
        \multirow{2}{*}{4} & $x^4+x^3+\beta x^2+ \beta x+\beta$ & \multirow{2}{*}{$[ 0, 9, 15, 33, 36, 42, 52, 54, 70 ]$}\\
         & where $\beta$ is a root of $x^2+x+1$ & \\\hline
        5 & $x^4+4x^2+4x+2$ & $[ 9, 63, 104, 170, 419, 487, 500, 542 ]$\\\hline
    \end{tabular}
    \captionsetup{width=0.85\textwidth}
    \caption{Minimal polynomial, over the base field, of \(\xi\) and exponents \(i_j\) used for the \(m=4\) tensor decompositions. }
    \label{table:m4}
\end{table}

\begin{remark}
For \(q\in\{2,4,5\}\), the decompositions in Corollary~\ref{cor:m=4} are optimal.
For \(q=3\), the corollary provides an explicit symmetric tensor decomposition of
\(M_{3^4}\) with nine elements. Several computational searches in MAGMA did not yield a
decomposition with eight elements, although a complete exhaustive search was not feasible with our implementation.
\end{remark}

\section{Symmetric tensor rank of symmetric rank-metric codes}\label{sec:rankmetriccodes}

Let \(\Sym_q(m)\) denote the space of symmetric matrices in
\(\mathbb F_q^{m\times m}\). The rank distance between two matrices
\(A,B\in\Sym_q(m)\) is defined by
\[
d(A,B)=\rk(A-B).
\]
For an \(\mathbb F_q\)-linear symmetric rank-metric code
\(C\subseteq \Sym_q(m)\), its minimum distance is
\[
d(C)=\min\{d(A,B):A,B\in C,\ A\ne B\}.
\]
Equivalently, since \(C\) is linear,
\[
d(C)=\min\{\rk(A):A\in C\setminus\{0\}\}.
\]
An \(\mathbb F_q\)-linear symmetric rank-metric code
\(C\leq \Sym_q(m)\) is described by the parameters $[m,\dim_{\mathbb F_q}(C),d(C)]_q$, where \(d(C)\) denotes its minimum distance. These parameters are constrained by the following Singleton-type bound.

\begin{theorem}[\text{\cite[Theorem 3.3]{schmidt2015symmetric}}]\label{thm:SingletonBound}
Let \(C\) be an \(\mathbb F_q\)-linear symmetric rank-metric code in
\(\Sym_q(m)\) with minimum distance \(d\). Then
\[
\dim_{\mathbb F_q}(C) \le
\begin{cases}
\dfrac{m(m-d+2)}{2}, & \text{if \(m-d\) is even},\\[1ex]
\dfrac{(m+1)(m-d+1)}{2}, & \text{if \(m-d\) is odd}.
\end{cases}
\]
\end{theorem}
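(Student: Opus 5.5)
Although this bound is cited from the literature, I would prove it by a puncturing/shortening argument on symmetric matrices. The natural operation keeps a principal submatrix. Let \(\pi_k:\Sym_q(m)\to\Sym_q(k)\) send \(A\) to its upper-left \(k\times k\) block. If \(A\) has rank \(r\), every principal \(k\times k\) submatrix has rank at least \(r-2(m-k)\), since deleting one row and one column lowers rank by at most \(2\).

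The plan is to choose \(k\) so that \(\pi_k\) is injective on \(C\), and then bound \(\dim C\le\dim\Sym_q(k)=k(k+1)/2\). Injectivity holds whenever \(d-2(m-k)\ge 1\), that is, \(k\ge m-\lfloor (d-1)/2\rfloor\). This alone gives only a crude bound, roughly \(\dim C\le \binom{m-\lfloor(d-1)/2\rfloor+1}{2}\). That is quadratic, whereas the target is about \(m(m-d+2)/2\), which is linear in \(m\) for fixed \(m-d\). So naive puncturing is too weak, and the right model is instead a Singleton-style count with linear constraints.

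\textbf{Refined approach (the one I would commit to).} Treat the case \(m-d\) even first. Set \(s=(d-2)/2\), so that \(m-d+2=m-2s\). Consider the linear map sending \(A\in C\) to the entries of \(A\) lying outside the region \(\{(i,j): i,j>\dots\}\); more precisely, take the entries in the first \(m-2s\)... In practice I would follow Schmidt's proof, which works with bilinear forms and association schemes (a Delsarte LP / inner-distribution argument). The cleaner self-contained route is linear algebra: find a subspace \(W\le\Sym_q(m)\) of dimension \(\dim\Sym_q(m)-m(m-d+2)/2\) in which every nonzero element has rank \(<d\). Then \(C\cap W=0\), which gives \(\dim C\le m(m-d+2)/2\).

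To build \(W\), I would take the symmetric matrices supported on a suitable block pattern. In the odd case, take all symmetric matrices whose nonzero entries lie in the last \(m-(m-d+1)\) rows and columns, appropriately arranged, so that the rank is at most \(d-1\). Then I would count dimensions and compare them with \(\dim\Sym_q(m)=m(m+1)/2\) minus the target bound.

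\textbf{Expected obstacle.} The dimension count must match exactly in both parities. The support of the matrices in \(W\) has to be small enough to force rank \(\le d-1\) yet large enough to realise the codimension \(m(m-d+2)/2\). Simple "border" supports do not have the right size in the even case. I expect to need matrices of the form \(\begin{pmatrix}0&B\\B^\top&D\end{pmatrix}\), which have rank \(\le 2\,\rk B+\rk D\), together with a Hankel-type constraint, as in Schmidt's treatment of even \(d\). If this counting fails, I would fall back on citing \cite[Theorem 3.3]{schmidt2015symmetric} directly, as the paper does, since its proof uses Delsarte theory on the symmetric bilinear forms scheme.
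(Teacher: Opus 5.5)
Your proposal does not yet contain a proof. The ``refined approach'' breaks off mid-sentence, the subspace $W$ is never constructed, and the fallback is the same citation the paper relies on: the paper gives no proof of this bound and simply quotes Schmidt.

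More seriously, the strategy you commit to cannot work in general. The step $C\cap W=0\Rightarrow \dim C\le \dim\Sym_q(m)-\dim W$ is fine; the problem is that the required $W$ need not exist. Take $d=2$. Every nonzero element of $W$ must then have rank one. If $\lambda vv^\top,\mu ww^\top\in W$ with $v,w$ linearly independent, their sum has rank $2$, so $\dim W\le 1$. The target, however, forces $\dim W=m(d-1)/2$ if $m-d$ is even and $(m+1)(d-1)/2$ if $m-d$ is odd. For $d=2$ this is $\lceil m/2\rceil$, so already $\dim W=2$ is needed for $(m,d)=(4,2)$ (even case) and for $(m,d)=(3,2)$ (odd case). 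Hence for $d=2$ your method gives at best $\dim C\le m(m+1)/2-1$.

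Passing from a subspace to an arbitrary anticode $W$ (all pairwise differences of rank $<d$, using injectivity of $(c,w)\mapsto c+w$) does not rescue it. A set whose pairwise differences have rank at most $1$ lies in a coset of a line $\{\lambda vv^\top:\lambda\in\Fq\}$, so it has at most $q$ elements. For $(m,d)=(3,2)$ this gives only $\dim C\le 5$ instead of $4$. So, unlike the non-symmetric Singleton bound (where matrices supported on $d-1$ rows form an extremal anticode), the symmetric bound is, for some parameters, strictly stronger than anything a complement or anticode count can give. No block pattern such as $\begin{pmatrix}0&B\\B^\top&D\end{pmatrix}$ will make your dimension count match there.

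What is missing is a way to exploit the linearity of $C$ through duality rather than through a complementary low-rank space. For $d=2$ you can see the kind of input needed. Use the nondegenerate pairing $\langle A,B\rangle=\sum_{i\le j}A_{ij}B_{ij}$ on $\Sym_q(m)$; then $\langle vv^\top,B\rangle=Q_B(v)$ for a quadratic form $Q_B$. So a subspace of codimension $c$ contains some $vv^\top\neq 0$ as soon as $c$ quadratic forms in $m$ variables have a common nontrivial zero, which Chevalley--Warning guarantees when $m>2c$. Hence $\operatorname{codim}C\ge\lceil m/2\rceil$, which is exactly the stated bound in both parities when $d=2$. For general $d$ you need a comparable global ingredient. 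I believe the cited source obtains it from Delsarte-type duality in the association scheme of symmetric bilinear forms, as you yourself indicate. As it stands, your proposal reduces to that citation rather than giving an independent argument.
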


As shown in \cite{schmidt2015symmetric}, this bound is tight. This motivates the following definition.

\begin{definition}
Let \(C\leq \Sym_q(m)\) be an \(\mathbb F_q\)-linear symmetric rank-metric code. We say that \(C\) is a \textbf{symmetric Maximum Rank Distance code},
or simply a \textbf{symmetric MRD code}, if equality holds in
Theorem~\ref{thm:SingletonBound}.
\end{definition}

Symmetric MRD codes exist for all admissible choices of \(q\), \(m\), and the minimum distance~\(d\); see~\cite[Theorem 4.4]{schmidt2015symmetric}. An equivalent description of these constructions in terms of linearized polynomials can be found in~\cite{longobardi2020automorphism}.

\begin{theorem}[\text{\cite[Theorem 4.4]{schmidt2015symmetric}}]\label{Th-Schmidt-4.4}
Let \(m\) and \(d\) be positive integers such that \(1\le d\le m\) and
\(m-d\) is even. Then
\[
S_{q,m,d}=
\left\{
b_0x+
\sum_{j=1}^{\frac{m-d}{2}}
\left(
b_jx^{q^j}+(b_jx)^{q^{m-j}}
\right)
:
b_0,\ldots,b_{\frac{m-d}{2}}\in \mathbb F_{q^m}
\right\}
\]
defines a symmetric MRD code in \(\Sym_q(m)\).
\end{theorem}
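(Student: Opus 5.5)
The plan is to check the three ingredients of the MRD property directly: that $S_{q,m,d}\subseteq\mathcal S_q(m)$, that $\dim_{\mathbb F_q}S_{q,m,d}=\frac{m(m-d+2)}{2}$, and that every nonzero element has rank at least $d$. Throughout, set $k=\frac{m-d}{2}$. Since $d\ge 1$, we have $k<m/2$, so the index sets $\{1,\ldots,k\}$ and $\{m-k,\ldots,m-1\}$ are disjoint and do not contain $0$.

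\emph{Symmetry.} Use the explicit adjoint formula from Section~\ref{sec:1}. The adjoint of $b_jx^{q^j}$ is $b_j^{q^{m-j}}x^{q^{m-j}}=(b_jx)^{q^{m-j}}$. Conversely, the adjoint of $(b_jx)^{q^{m-j}}=b_j^{q^{m-j}}x^{q^{m-j}}$ is $\bigl(b_j^{q^{m-j}}\bigr)^{q^{j}}x^{q^j}=b_jx^{q^j}$. Each bracket $b_jx^{q^j}+(b_jx)^{q^{m-j}}$ is therefore self-adjoint, and $b_0x$ is self-adjoint as well. Hence $S_{q,m,d}\le\mathcal S_q(m)$. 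Via the isometry $\varphi$ and the subsequent remark, we may then regard $S_{q,m,d}$ as an $\mathbb F_q$-linear code in $\Sym_q(m)$, with rank preserved.

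\emph{Dimension.} The map $(b_0,\ldots,b_k)\mapsto b_0x+\sum_{j=1}^{k}\bigl(b_jx^{q^j}+(b_jx)^{q^{m-j}}\bigr)$ is $\mathbb F_q$-linear. It is injective, because the coefficient of $x^{q^j}$ is exactly $b_j$ for $0\le j\le k$, and by the disjointness noted above these coefficients are not disturbed by the terms in $x^{q^{m-j}}$. Hence $\dim_{\mathbb F_q}S_{q,m,d}=m(k+1)=\frac{m(m-d+2)}{2}$, which is the bound of Theorem~\ref{thm:SingletonBound} for $m-d$ even.

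\emph{Minimum distance.} This is the main step. A nonzero $f\in S_{q,m,d}$ is supported on the monomials $x^{q^i}$ with $i\in\{-k,\ldots,k\}\bmod m$. I would consider $g(x)=f(x)^{q^k}$ reduced modulo $x^{q^m}-x$; this shifts every index by $k$, so $g$ is supported on $x^{q^0},\ldots,x^{q^{2k}}$, where $2k=m-d<m$. As a genuine polynomial $g$ then has $q$-degree at most $m-d$. It is nonzero, since Frobenius acts bijectively on coefficients. Moreover $\ker g=\ker f$, because $y\mapsto y^{q^k}$ is injective on $\mathbb F_{q^m}$. A nonzero polynomial of degree at most $q^{m-d}$ has at most $q^{m-d}$ roots, so $\dim_{\mathbb F_q}\ker f\le m-d$ and $\rk f\ge d$. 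Thus $d(S_{q,m,d})\ge d$.

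\emph{Equality of the distance.} It remains to check that the minimum distance is exactly $d$, so that equality in Theorem~\ref{thm:SingletonBound} holds with $d(C)$ itself. Suppose instead that $d(C)=d'>d$. The right-hand side of Theorem~\ref{thm:SingletonBound} is strictly decreasing in the distance. For instance, passing from $d$ to $d+1$ changes $\frac{m(m-d+2)}{2}$ into $\frac{(m+1)(m-d)}{2}$, and the difference is $\frac{m+d}{2}>0$; further steps decrease it again. The bound at $d'$ would then contradict the dimension just computed. Hence $d(C)=d$ and $S_{q,m,d}$ is a symmetric MRD code.

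The only delicate points are the index bookkeeping modulo $m$ in the Frobenius shift and this monotonicity check across the two parity cases.
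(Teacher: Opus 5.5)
The paper gives no proof of this statement. It is quoted from Schmidt's work as background, so there is no internal argument to compare yours with.

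Your proof is correct and self-contained, and it is written in the paper's own linearized-polynomial language, which suits it well:

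\begin{itemize}
\item The adjoint formula from Section~\ref{sec:1} gives $S_{q,m,d}\le\mathcal S_q(m)$.
\item Your observation that $d\ge 1$ forces $2k<m$ does two jobs. It keeps $\{0,\ldots,k\}$ and $\{m-k,\ldots,m-1\}$ disjoint, which rules out the collision $j=m-j$, and it yields $\dim_{\mathbb F_q}S_{q,m,d}=m(k+1)$.
\item Applying the $q^k$-Frobenius produces a nonzero $q$-polynomial of $q$-degree at most $2k=m-d$ with the same kernel. This is the standard root-counting argument behind Gabidulin-type constructions. Equivalently, it bounds the radical of $B_f$, which Section~\ref{sec:1} identifies with $\ker f$.
\end{itemize}

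Your final monotonicity step is also right, and it is genuinely needed, because the paper defines MRD as equality in Theorem~\ref{thm:SingletonBound} at $d(C)$ itself. The bound drops by $\frac{m+d'}{2}$ when $m-d'$ is even and by $\frac{m-d'+1}{2}$ when $m-d'$ is odd. Both are positive for $1\le d'\le m$, so $d(C)>d$ would contradict your dimension count.
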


Furthermore, \cite[Theorem~4.1]{schmidt2015symmetric} shows that symmetric
MRD codes in \(\Sym_q(m)\) with minimum distance \(d\) and \(m-d\) odd can be
obtained by \textit{puncturing} suitable symmetric MRD codes of the form described above, that is, by deleting appropriate rows and columns in a matrix representation.

When \(d=3\) and \(m\) is odd, these codes are perfect; see
\cite{mushrraf2025perfect}. In addition, efficient encoding and decoding
algorithms for these families of codes have been developed; see
\cite{gabidulin2002representation,gabidulin2004symmetric,
gabidulin2006symmetric,kadir2022encoding}. Other constructions of symmetric MRD codes are presented in
\cite[Section~5]{longobardi2020automorphism},
\cite{zhou2020equivalence,bik2024higher,tang2026new}; see also
\cite{byrne2026q,alnajjarine2026linear} for connections with conics and
matroids. Following \cite{byrne2019tensor,byrne2023tensor}, we define the symmetric
tensor rank of a symmetric rank-metric code.

\begin{definition}\label{def:symtrk}
Let \(C\leq \Sym_q(m)\) be an \(\mathbb F_q\)-linear symmetric rank-metric code. The minimum number \(R\) of rank-one matrices
\(A_1,\ldots,A_R\in \Sym_q(m)\) such that
\[
C\leq \langle A_1,\ldots,A_R\rangle_{\mathbb F_q}
\]
is called the \textbf{symmetric tensor rank} of \(C\). We denote it by
\(\operatorname{strk}(C)\).
\end{definition}

\begin{remark}
This definition is compatible with the usual
notion of symmetric tensor rank for symmetric tensors. Indeed, if \(X\) is a
symmetric tensor and \(C=\langle \ss_1(X)\rangle_{\mathbb F_q}\) is its first-slice
space, then Proposition~\ref{prop:symtrk} shows that
the symmetric tensor rank of \(X\) is the smallest integer \(R\) for which there
exist rank-one symmetric matrices \(A_1,\ldots,A_R\in\Sym_q(m)\) satisfying
\[
C\leq \langle A_1,\ldots,A_R\rangle_{\mathbb F_q}.
\]
Thus, when a symmetric rank-metric code arises as the first-slice space of a
symmetric tensor, its symmetric tensor rank agrees with the usual symmetric
tensor rank of that tensor. In this sense, the definition extends symmetric
tensor rank from individual symmetric tensors to arbitrary symmetric
rank-metric codes.
\end{remark}

\begin{remark}
The symmetric tensor rank is invariant under the natural equivalence of
symmetric rank-metric codes. Recall that the rank-preserving isometries of
\(\Sym_q(m)\) are given by
\[
\varphi_P:\Sym_q(m)\longrightarrow \Sym_q(m):
A\longmapsto P^\top A P,
\]
where \(P\in \GL\); see~\cite{wan1996geometry}. Thus two symmetric
rank-metric codes \(C,C'\subseteq \Sym_q(m)\) are \textbf{equivalent} if
\(C'=\varphi_P(C)\) for some \(P\in \GL\). Suppose that
\[
C\leq \langle A_1,\ldots,A_R\rangle_{\mathbb F_q},
\]
where \(A_1,\ldots,A_R\in\Sym_q(m)\) have rank one. Applying \(\varphi_P\), we get
\[
C'\leq
\langle P^\top A_1P,\ldots,P^\top A_RP\rangle_{\mathbb F_q}.
\]
Since congruence preserves symmetry and rank, each \(P^\top A_jP\) is again a
rank-one symmetric matrix. Hence any symmetric rank-one spanning set for \(C\)
gives one of the same size for \(C'\). Applying the same argument to
\(\varphi_{P^{-1}}\), we obtain
\[
\operatorname{strk}(C)=\operatorname{strk}(C').
\]
Thus \(\operatorname{strk}\) is an invariant of equivalence classes of
symmetric rank-metric codes.
\end{remark}

The symmetric tensor rank is the symmetric analogue of the usual tensor rank of
a rank-metric code. Recall that, for a rank-metric code
\(\mathcal C\subseteq \mathbb F_q^{m\times m}\), its tensor rank is the minimum
integer \(R\) such that there exist rank-one matrices
\(A_1,\ldots,A_R\in \mathbb F_q^{m\times m}\) satisfying
\[
\mathcal C\subseteq \langle A_1,\ldots,A_R\rangle_{\mathbb F_q}.
\]
For a symmetric rank-metric code \(\mathcal C\subseteq \Sym_q(m)\), the invariant
\(\operatorname{strk}(\mathcal C)\) is obtained by imposing the additional
requirement that the rank-one matrices \(A_1,\ldots,A_R\) be symmetric. Hence, we have
\[
\operatorname{trk}(\mathcal C)\le \operatorname{strk}(\mathcal C).
\]

The codes \(\mathcal S_{q,m,d}\) can be viewed as symmetric
analogues of Gabidulin codes. In the extremal case \(d=m\), they coincide with
the one-dimensional Gabidulin code; see
\cite[Proposition~3.1]{longobardi2020automorphism}. For \(d<m\), however, these
codes are not Gabidulin codes in general.

The results obtained in the previous sections can be reformulated naturally in
the language of symmetric rank-metric codes. Indeed, for \(d=m\), the code
\[
\mathcal S_{q,m,m}=\langle x\rangle_{\mathbb F_{q^m}}\leq S_q(m)
\]
is the one-dimensional Gabidulin code, viewed through the correspondence of
Section~\ref{sec:1} as an \(\mathbb F_q\)-linear symmetric
rank-metric code. Moreover, the first-slice space of the multiplication tensor
\(M_{q^m}\) can be identified with \(\mathcal S_{q,m,m}\). Therefore,
Proposition~\ref{prop:symtrk} gives
\[
\operatorname{strk}(\mathcal S_{q,m,m})=\mu_q^{\mathrm{sym}}(m).
\]
Equivalently, \(\mu_q^{\mathrm{sym}}(m)\) is the minimum integer \(R\) such that
\[
\mathcal S_{q,m,m}
\leq
\langle f_1,\ldots,f_R\rangle_{\mathbb F_q},
\]
where \(f_1,\ldots,f_R\in S_q(m)\) are rank-one symmetric linearized
polynomials.

Thus, the explicit decompositions constructed above can be read equivalently as
decompositions of \(M_{q^m}\) or as symmetric rank-one spanning sets for
\(\mathcal S_{q,m,m}\). The next two propositions make this correspondence
precise.

\begin{proposition}
With the notation above, the following hold.
\begin{enumerate}
    \item \(\operatorname{strk}(\mathcal S_{q,m,m})=\mu_q^{\mathrm{sym}}(m)\).
    \item \(\operatorname{strk}(\mathcal S_{q,m,m})\ge 2m-1\), with equality if and only if \(q\ge 2m-2\).
    \item If
    \[
    \frac q2+1<m\le \frac{1}{2}(q+1+\epsilon(q)),
    \]
    then
    \[
    \operatorname{strk}(\mathcal S_{q,m,m})=2m.
    \]
\end{enumerate}
\end{proposition}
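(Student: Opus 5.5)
The plan is to derive all three items from item 1 together with the results on $\mu_q^{\mathrm{sym}}(m)$ recalled in Section~\ref{sec:2}. Items 2 and 3 then follow at once from Theorem~\ref{thm:burg}, Remark~\ref{rem:burg} and Theorem~\ref{thm:shok}.

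For item 1, I would argue as follows. By Definition~\ref{def:symtrk}, $\operatorname{strk}(\mathcal S_{q,m,m})$ is the least $R$ such that $\mathcal S_{q,m,m}$ is contained in the $\mathbb F_q$-span of $R$ rank-one matrices of $\Sym_q(m)$. Through the rank-preserving isometry $\varphi$ of Section~\ref{sec:1}, we fix a basis and its trace-dual basis. Under this identification, rank-one symmetric matrices correspond exactly to rank-one symmetric linearized polynomials. By Proposition~\ref{prop:rank1sym}, these are the polynomials $\alpha\operatorname{Tr}_{q^m/q}(\beta x)$ with $\alpha/\beta\in\mathbb F_q^*$. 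The code $\mathcal S_{q,m,m}$ corresponds to $\langle x\rangle_{\mathbb F_{q^m}}$: taking $d=m$ in Theorem~\ref{Th-Schmidt-4.4} leaves only the term $b_0x$. Since $\varphi$ is an $\mathbb F_q$-linear bijection, inclusions of $\mathbb F_q$-spans are preserved in both directions. Hence $\operatorname{strk}(\mathcal S_{q,m,m})\le R$ if and only if condition 2 of Proposition~\ref{prop:<=Rwithlinpol} holds, and that proposition identifies this condition with $\mu_q^{\mathrm{sym}}(m)\le R$. Taking the minimum over $R$ gives item 1.

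I would also cross-check this against Proposition~\ref{prop:symtrk}. The first-slice space of $M_{q^m}$ is identified with $\mathcal G_1(m,q)=\mathcal S_{q,m,m}$, and $\operatorname{strk}$ is invariant under congruence, so the choice of basis does not matter.

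For item 2, combine item 1 with Theorem~\ref{thm:burg}. This gives $\mu_q^{\mathrm{sym}}(m)\ge\mu_q(m)\ge 2m-1$. By Remark~\ref{rem:burg}, $\mu_q^{\mathrm{sym}}(m)=2m-1$ holds exactly when $q\ge 2m-2$. For item 3, Theorem~\ref{thm:shok} gives $\mu_q^{\mathrm{sym}}(m)=2m$ in the stated range, and item 1 transfers this to $\operatorname{strk}(\mathcal S_{q,m,m})$.

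The only delicate point is item 1: one must make sure that the identification of the first-slice space with $\langle x\rangle_{\mathbb F_{q^m}}$ is compatible with the symmetric structure. Concretely, the identification should be realized by a congruence, or directly by $\varphi$, rather than by an arbitrary equivalence of matrix spaces, so that rank-one symmetric matrices go to rank-one symmetric matrices. Routing the argument through Proposition~\ref{prop:<=Rwithlinpol}, which is already phrased purely in linearized-polynomial terms, avoids this issue.
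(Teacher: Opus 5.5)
Your proposal is correct and follows essentially the paper's route. For item~1, the paper applies Proposition~\ref{prop:symtrk} together with the identification of the first-slice space of $M_{q^m}$ with $\mathcal S_{q,m,m}$; you use the same identification in its linearized-polynomial form, Proposition~\ref{prop:<=Rwithlinpol}, transported through the isometry $\varphi$. Items~2 and~3 then follow from Theorem~\ref{thm:burg}, Remark~\ref{rem:burg} and Theorem~\ref{thm:shok}, as you say. You also point out that the identification must be realized by a congruence (a Gram-matrix representation) rather than an arbitrary equivalence; the paper leaves this implicit, and your treatment of it is correct.
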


As a consequence of Theorems~\ref{thm:m=2}, \ref{thm:m=3}, and~\ref{thm:m=4}, we obtain explicit symmetric rank-one spanning sets for the
codes \(\mathcal S_{q,m,m}\) in the corresponding cases.

\begin{proposition}
With the notation above, the following hold.

\begin{enumerate}
    \item We have \[\operatorname{strk}(\mathcal S_{q,2,2})=3.\] A symmetric rank-one spanning set is given by
    \[
    \left\{
    \operatorname{Tr}_{q^2/q}(x),\;
    \eta \operatorname{Tr}_{q^2/q}(\eta x),\;
    \eta^2 \operatorname{Tr}_{q^2/q}(\eta^2 x)
    \right\},
    \]
    where \(\eta\in \mathbb F_{q^2}^\ast\) satisfies $\eta^{2q}-\eta^{2q-1}-\eta^{q+1}+\eta^{q-1}+\eta-1\neq 0$.

    \item For every prime power \(q\), we have
    \[
    \operatorname{strk}(\mathcal S_{q,3,3})\le 6.
    \]
    In particular,
    \[
    \operatorname{strk}(\mathcal S_{2,3,3})
    =
    \operatorname{strk}(\mathcal S_{3,3,3})
    =
    6.
    \]
    Moreover, if \(q\ge 4\), then
    \[
    \operatorname{strk}(\mathcal S_{q,3,3})=5.
    \]
    A symmetric rank-one spanning set with six elements is given by
    \[
    \left\{
    \operatorname{Tr}_{q^3/q}(x),\;
    \xi\operatorname{Tr}_{q^3/q}(\xi x),\;
    \ldots,\;
    \xi^5\operatorname{Tr}_{q^3/q}(\xi^5 x)
    \right\},
    \]
    where \(\xi\in \mathbb F_{q^3}\) is chosen as in
    Corollary~\ref{cor:m=3}.

    \item We have
    \[
    \operatorname{strk}(\mathcal S_{2,4,4})=9.
    \]
    A symmetric rank-one spanning set is given by
    \[
    \left\{
    \xi^{i_1}\operatorname{Tr}_{2^4/2}(\xi^{i_1}x),\ldots,
    \xi^{i_9}\operatorname{Tr}_{2^4/2}(\xi^{i_9}x)
    \right\},
    \]
    where \(\xi\) and \(i_1,\ldots,i_9\) are as in Table~\ref{table:m4}.

    \item We have
    \[
    \operatorname{strk}(\mathcal S_{3,4,4})\le 9.
    \]
    A symmetric rank-one spanning set is given by
    \[
    \left\{
    \xi^{i_1}\operatorname{Tr}_{3^4/3}(\xi^{i_1}x),\ldots,
    \xi^{i_9}\operatorname{Tr}_{3^4/3}(\xi^{i_9}x)
    \right\},
    \]
    where \(\xi\) and \(i_1,\ldots,i_9\) are as in Table~\ref{table:m4}.

    \item For \(q\in\{4,5\}\), we have
    \[
    \operatorname{strk}(\mathcal S_{q,4,4})=8.
    \]
    A symmetric rank-one spanning set is given by
    \[
    \left\{
    \xi^{i_1}\operatorname{Tr}_{q^4/q}(\xi^{i_1}x),\ldots,
    \xi^{i_8}\operatorname{Tr}_{q^4/q}(\xi^{i_8}x)
    \right\},
    \]
    where \(\xi\) and \(i_1,\ldots,i_8\) are as in Table~\ref{table:m4}.
\end{enumerate}
\end{proposition}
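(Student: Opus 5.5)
The plan is to derive all five items from the identity $\operatorname{strk}(\mathcal S_{q,m,m})=\mu_q^{\mathrm{sym}}(m)$ of the previous proposition, combined with the explicit decompositions of Section~\ref{sec:4}. Under the isometry $\varphi$ of Section~\ref{sec:1}, a rank-one symmetric linearized polynomial corresponds to a rank-one symmetric matrix. By Proposition~\ref{prop:rank1sym}, every such polynomial has the form $\alpha\Tr_{q^m/q}(\beta x)$ with $\alpha/\beta\in\Fq^*$. Therefore any family $\{\alpha_j\Tr_{q^m/q}(\alpha_j x)\}_{j=1}^R$ whose $\Fq$-span contains $\langle x\rangle_{\fqm}=\mathcal S_{q,m,m}$ is a symmetric rank-one spanning set, and it gives $\operatorname{strk}(\mathcal S_{q,m,m})\le R$. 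The fact that these families span is exactly what Theorem~\ref{thm:charactfinal} certifies, through Proposition~\ref{prop:<=Rwithlinpol}.

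For item~1, Theorem~\ref{thm:m=2} supplies the spanning set, including the existence of a suitable $\eta$. The lower bound $3=2\cdot2-1$ follows from the previous proposition (part~2), which rests on Theorem~\ref{thm:burg}.

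For item~2, Corollary~\ref{cor:m=3} provides $\xi\in\F_{q^3}\setminus\Fq$ with $f(\xi)\neq0$ for every $q$. Hence the family $\{\xi^i\Tr(\xi^i x)\}_{i=0}^5$ spans, and $\operatorname{strk}\le 6$. When $q\in\{2,3\}$, the values $\mu_2^{\sym}(3)=\mu_3^{\sym}(3)=6$ from Table~\ref{table:values} give equality. When $q\ge4=2\cdot3-2$, Theorem~\ref{thm:burg} together with Remark~\ref{rem:burg} gives $\mu_q^{\sym}(3)=5$.

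Items~3--5 follow from Corollary~\ref{cor:m=4}. Its hypothesis is the MAGMA-verified solvability of $AX=b_i$ over $\Fq$ for the exponents listed in Table~\ref{table:m4}, and Theorem~\ref{thm:m=4} then shows that these families span $\langle x\rangle_{\F_{q^4}}$. For $q=2$, optimality comes from $\mu_2^{\sym}(4)=9$ in Table~\ref{table:values}. For $q=4$ it comes from the table value $8$, and for $q=5$ from Theorem~\ref{thm:shok}, since $5/2+1<4\le(5+1+4)/2$. For $q=3$ we claim only the upper bound.

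The main obstacle I expect is not conceptual. The care needed is in the identification $\mathcal S_{q,m,m}=\langle x\rangle_{\fqm}$ with the first-slice space: the matrices representing the first slice must be taken with respect to a basis and its trace-dual basis, so that they are genuinely symmetric. Once this identification is in place, $\operatorname{strk}$ is basis-independent by the equivalence-invariance remark, and every item reduces to results already proved. The remaining risk is the computational certificate for $m=4$, which we accept as given.
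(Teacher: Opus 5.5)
Your route is the paper's own: the paper gives no separate argument and presents the proposition as a consequence of $\operatorname{strk}(\mathcal S_{q,m,m})=\mu_q^{\mathrm{sym}}(m)$ together with Theorems~\ref{thm:m=2}, \ref{thm:m=3} and \ref{thm:m=4}. All the numerical values you state are correct, including $5/2+1<4\le(5+1+4)/2$ for $q=5$. The genuine gap is the explicit spanning set in item~1. You take it from Theorem~\ref{thm:m=2} by citation, but that theorem is false for every odd $q$, because its determinant is miscomputed. The coefficient matrix has rows $(1,y,y^2)$ with $y=\eta^2,\eta^{2q},\eta^{q+1}$, so it is a Vandermonde matrix with determinant
\[
(\eta^{2q}-\eta^2)(\eta^{q+1}-\eta^2)(\eta^{q+1}-\eta^{2q})=-\eta^{q+1}(\eta^q-\eta)^3(\eta^q+\eta).
\]
This vanishes exactly when $\eta^2\in\mathbb F_q$.

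Now take $q$ odd and $\eta^{q-1}=-1$, for example $q=3$ and $\eta^2=-1$ in $\mathbb F_9$. Then $\eta^{2q}=\eta^2$, $\eta^{2q-1}=\eta$ and $\eta^{q+1}=-\eta^2$. Hence $\eta^{2q}-\eta^{2q-1}-\eta^{q+1}+\eta^{q-1}+\eta-1=2(\eta^2-1)\neq 0$, since $q$ is odd and $\eta\neq\pm1$, so the stated hypothesis holds. Yet $\eta^2\in\mathbb F_q^*$, so the coordinate vectors $(1,1)$, $(\eta^2,\eta^{q+1})$, $(\eta^4,\eta^{2q+2})$ of the three polynomials all lie in $\mathbb F_q^2$. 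Their $\mathbb F_q$-span therefore cannot contain $(\xi,0)$ with $\xi\notin\mathbb F_q$. For $q=3$ one even has $\eta^2\operatorname{Tr}_{9/3}(\eta^2x)=\operatorname{Tr}_{9/3}(x)$, so the set collapses to two elements.

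The correct hypothesis is $\eta^2\notin\mathbb F_q$, which is both necessary and sufficient. At most $2(q-1)$ elements of $\mathbb F_{q^2}^*$ violate it, so existence is immediate. With this hypothesis, the rest of the proof of Theorem~\ref{thm:m=2} goes through: the solution is unique, hence fixed by Frobenius, hence in $\mathbb F_q^3$. The value $\operatorname{strk}(\mathcal S_{q,2,2})=3$ is unaffected, since it already follows from Theorem~\ref{thm:burg}.

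There is also a smaller problem in item~5. The $q=4$ row of Table~\ref{table:m4} lists nine exponents, the same list as the $q=3$ row. So the printed data does not specify the eight-element set being claimed, and accepting the computational certificate ``as given'' does not cover this. The value $\operatorname{strk}(\mathcal S_{4,4,4})=8$ itself is safe, because Theorem~\ref{thm:shok} also applies to $q=4$: $3<4\le 9/2$.
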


We now give an explicit matrix realization of one of the decompositions above. The previous results are stated in terms of rank-one symmetric linearized
polynomials. Via the correspondence among symmetric linearized polynomials,
symmetric bilinear forms, and symmetric matrices, these decompositions can be
translated into decompositions inside \(\Sym_q(m)\). The following example
illustrates this translation explicitly.

\begin{example}
    Consider the symmetric rank-metric code $C\leq\Sym_2(4)$ spanned by the following matrices.

    \begin{equation*}
        C_1=\begin{pmatrix}
1 & 0 & 0 & 1\\
0 & 0 & 1 & 0\\
0 & 1 & 0 & 0\\
1 & 0 & 0 & 0\\
\end{pmatrix},\;
C_2=\begin{pmatrix}
1 & 0 & 0 & 0\\
0 & 0 & 0 & 1\\
0 & 0 & 1 & 0\\
0 & 1 & 0 & 0\\
\end{pmatrix},\;
C_3=\begin{pmatrix}
0 & 1 & 0 & 0\\
1 & 1 & 0 & 0\\
0 & 0 & 0 & 1\\
0 & 0 & 1 & 0\\
\end{pmatrix},\;
C_4=\begin{pmatrix}
0 & 0 & 1 & 0\\
0 & 1 & 1 & 0\\
1 & 1 & 0 & 0\\
0 & 0 & 0 & 1\\
\end{pmatrix}.
    \end{equation*}
One can  check that this is the one-dimensional Gabidulin code \(\langle x\rangle_{\mathbb F_{2^4}}\),
represented with respect to the ordered basis $(1,a,a^2,a^3)$ and its trace-dual basis $(a^{14},a^2,a,1)$. Using the same bases, the rank-one symmetric linearized polynomials determined
by the exponents in Table~\ref{table:m4} for \(q=2\) are represented by the
following matrices.
\begin{equation*}
\begin{array}{ccc}
A_1=\begin{pmatrix}
1 & 0 & 0 & 0\\
0 & 0 & 0 & 0\\
0 & 0 & 0 & 0\\
0 & 0 & 0 & 0
\end{pmatrix},&
A_2=\begin{pmatrix}
0 & 0 & 0 & 0\\
0 & 1 & 0 & 0\\
0 & 0 & 0 & 0\\
0 & 0 & 0 & 0
\end{pmatrix},&
A_3=\begin{pmatrix}
1 & 1 & 0 & 0\\
1 & 1 & 0 & 0\\
0 & 0 & 0 & 0\\
0 & 0 & 0 & 0
\end{pmatrix},\\[6ex]
A_4=\begin{pmatrix}
0 & 0 & 0 & 0\\
0 & 1 & 1 & 0\\
0 & 1 & 1 & 0\\
0 & 0 & 0 & 0
\end{pmatrix},&
A_5=\begin{pmatrix}
0 & 0 & 0 & 0\\
0 & 0 & 0 & 0\\
0 & 0 & 1 & 1\\
0 & 0 & 1 & 1
\end{pmatrix},&
A_6=\begin{pmatrix}
0 & 0 & 0 & 0\\
0 & 1 & 0 & 1\\
0 & 0 & 0 & 0\\
0 & 1 & 0 & 1
\end{pmatrix},\\[6ex]
A_7=\begin{pmatrix}
1 & 1 & 1 & 0\\
1 & 1 & 1 & 0\\
1 & 1 & 1 & 0\\
0 & 0 & 0 & 0
\end{pmatrix},&
A_8=\begin{pmatrix}
0 & 0 & 0 & 0\\
0 & 1 & 1 & 1\\
0 & 1 & 1 & 1\\
0 & 1 & 1 & 1
\end{pmatrix},&
A_9=\begin{pmatrix}
1 & 0 & 0 & 1\\
0 & 0 & 0 & 0\\
0 & 0 & 0 & 0\\
1 & 0 & 0 & 1
\end{pmatrix}.
\end{array}
\end{equation*}
One can check that \(C\) is contained in the \(\mathbb F_2\)-span of these
rank-one symmetric matrices. Indeed, we have 
\begin{align*}
    C_1&=A_5+A_6+A_8+A_9,\\
    C_2&=A_1+A_4+A_5+A_8,\\
    C_3&=A_1+A_2+A_3+A_4+A_6+A_8,\\
    C_4&=A_3+A_4+A_5+A_6+A_7+A_8.
\end{align*}
Thus \(\{A_1,\ldots,A_9\}\) gives an explicit symmetric rank-one spanning set
for \(C\).
\end{example}

\section{Final remarks}

In this paper, we developed a linearized-polynomial approach to the
symmetric tensor decomposition of the multiplication map
\[
M_{q^m}:\mathbb F_{q^m}\times \mathbb F_{q^m}\to \mathbb F_{q^m}.
\]
Using the correspondence among symmetric linearized polynomials,
symmetric bilinear forms, and symmetric matrices, we translated the
existence of symmetric tensor decompositions into explicit trace
representations and linear systems over finite fields. This provides a
constructive framework for searching for decompositions and for verifying
them directly. In small extension degrees, the method recovers known
values of \(\mu_q^{\mathrm{sym}}(m)\) and produces explicit symmetric
rank-one spanning sets.

The point of view developed here also places the problem naturally in the
setting of symmetric rank-metric codes. Indeed, the first-slice space of
the multiplication tensor is equivalent to the one-dimensional Gabidulin
code
\[
\mathcal{G}_1(m,q)=\langle x\rangle_{\mathbb F_{q^m}},
\]
which is contained in the space \(S_q(m)\) of symmetric linearized
polynomials. Thus the symmetric tensor rank of \(M_{q^m}\) can be
interpreted as the symmetric tensor rank of this natural one-dimensional
symmetric rank-metric code. This reformulation suggests that techniques
from rank-metric coding theory may be useful not only for deriving lower
bounds, but also for constructing and organizing explicit decompositions.

Several questions remain open. A direction is to determine the exact
values of~\(\mu_q^{\mathrm{sym}}(m)\) for further parameters, especially
for small \(q\), where the general equality \(\mu_q^{\mathrm{sym}}(m)=2m-1\)
does not apply. A related problem is to find uniform families of elements
\(\alpha_1,\ldots,\alpha_R\in \mathbb F_{q^m}\) satisfying the linear
conditions obtained in this paper. Such families would lead to explicit
constructions of symmetric tensor decompositions beyond the small values
of \(m\) treated here.

Another natural direction is to extend the coding-theoretic interpretation
to broader classes of symmetric rank-metric codes. The invariant
\(\operatorname{strk}(C)\) measures how efficiently a symmetric code can be
covered by rank-one symmetric matrices. It would be interesting to
understand how this invariant interacts with classical code parameters,
such as dimension, minimum distance, and MRD properties, and whether these
parameters impose meaningful lower or upper bounds on
\(\operatorname{strk}(C)\).

Finally, the formulation in terms of Frobenius-stable linear systems
suggests algorithmic questions. One may ask for efficient procedures to
search for suitable elements \(\alpha_i\), to certify non-existence for a
given length \(R\), or to exploit the symmetry and Frobenius structure in
order to reduce the computational complexity of the search. Developing
such algorithms could make the framework applicable to larger extension
degrees and could provide new data for the study of symmetric bilinear
complexity over finite fields.

\bigskip
\bibliographystyle{abbrv}
\bibliography{ourbib}

\end{document}